\theoremstyle{plain}
\newtheorem{theorem}{Theorem}[section]
\newtheorem{lemma}{Lemma}[section]
\newtheorem{proposition}{Proposition}[section]
\newtheorem{corollary}{Corollary}[section]
\theoremstyle{definition}
\theoremstyle{remark}
\newtheorem{note}{Note}
\renewcommand{\v}[1]{\boldsymbol{\mathbf{#1}}} 
\newcommand{\overbar}[1]{\mkern 1.5mu\overline{\mkern-1.5mu#1\mkern-1.5mu}\mkern 1.5mu}
\providecommand{\keywords}[1]
{
  \small	
  \textbf{\textit{Keywords: }} #1
}
\let\div\undefined
\DeclareMathOperator{\div}{div}
\numberwithin{equation}{section} 
\numberwithin{figure}{section} 
\numberwithin{table}{section} 
\definecolor{grisf}{rgb}{.47,.47,.47} 
\newcommand{\colorc}{\color{SkyBlue}}
\newcommand{\colorb}{\color{Cerulean}}
\newcommand{\colora}{\color{Blue}}
\titleformat{\chapter}[display]
  {\normalfont\sffamily\bfseries\huge\colora\centering}{\thechapter}{1ex}
  {{\titlerule[1pt]}\vspace{1.3ex}}[\vspace{1ex}{{\titlerule[1pt]}}]
\titleformat{\section}[hang]{\Large\normalfont\sffamily\bfseries\colorb}{{\thesection\, }}{0 em}
  {}[{\titlerule[1pt]}\vspace{1ex}]
\titleformat{\subsection}[hang]{\Large\normalfont\sffamily\bfseries\colorc}{{\thesubsection\, }}{0 em}
  {}[{\titlerule}\vspace{.7ex}]
\titleformat{\subsubsection}[hang]{\normalfont\sffamily\bfseries\large}{{\thesubsubsection\, }}{0 em}
  {}[{\color{grisf}\titlerule}\vspace{3pt}]
\titleformat{\paragraph}[runin]{\normalfont\sffamily\bfseries\colorc}{}{0 em}
  {\indent}
\patchcmd{\@fancyhead}{\rlap}{\color{grisf}\rlap}{}{}
\patchcmd{\headrule}{\hrule}{\color{grisf}\hrule}{}{}
\patchcmd{\@fancyfoot}{\rlap}{\color{grisf}\rlap}{}{}
\patchcmd{\footrule}{\hrule}{\color{grisf}\hrule}{}{}
\title{H\"older stability of quantitative photoacoustic tomography based on partial data}
\author{
 Faouzi TRIKI\footnote{Faouzi Triki,  Laboratoire Jean Kuntzmann,  UMR CNRS 5224, 
Universit\'e  Grenoble-Alpes, 700 Avenue Centrale,
38401 Saint-Martin-d'H\`eres, France; Email: Faouzi.Triki@univ-grenoble-alpes.fr},  \; and   Qi XUE \footnote{Qi Xue,  Laboratoire Jean Kuntzmann,  UMR CNRS 5224, 
Universit\'e  Grenoble-Alpes, 700 Avenue Centrale,
38401 Saint-Martin-d'H\`eres, France; Email: Qi.Xue@univ-grenoble-alpes.fr \& xueqi.bhlt@gmail.com. \\The authors were supported by the grant ANR-17-CE40-0029 of the French National Research Agency ANR (project MultiOnde).}\\
}
\date{}
\begin{document}
\maketitle


\begin{abstract}
We consider the reconstruction of the diffusion and absorption coefficients of the diffusion equation
from the internal information of the solution obtained from the first step of the inverse photoacoustic tomography (PAT).
In practice, the internal information is only partially provided near the boundary 
due to the high absorption property of the medium and the limitation of the equipment.
Our main contribution is to prove a H\"older stability of the inverse problem in a subregion
where the internal information is reliably provided 
based on the stability estimation of a Cauchy problem satisfied by the diffusion coefficient.
The exponent of the H\"older stability converges to a positive constant independent of the subregion as 
the subregion contracts towards the boundary. 
Numerical experiments demonstrates that it is possible to locally reconstruct the diffusion and absorption coefficients
for smooth and even discontinuous media.
\end{abstract}
\keywords{Photoacoustic tomography, H\"older stability, Cauchy problem}
\pagestyle{fancy}

\section{Introduction}

PAT is a hybrid medical imaging technique which combines the high contrast of optical parameters 
with the high resolution of ultrasonic waves \cite{wang2017photoacoustic, ammari2017multi, fisher2007photoacoustic, li2009photoacoustic, kuchment2010mathematics}.
In PAT, near infra-red (NIR) photons are sent into the biological tissue which is heated up due to the absorption of the energy.
The heating then results in the expansion of the tissue which generates a pressure field.
The measurement of the pressure field on the boundary is used to reconstruct the optical properties of the tissue.

\vskip .3cm
The inverse problem of PAT can be decomposed into two steps.
The first step is to reconstruct the absorbed radiation map $H(x)$ from the measurement of ultrasonic waves on the boundary
\cite{fisher2007photoacoustic,agranovsky2007uniqueness,hristova2008reconstruction,
kunyansky2008thermoacoustic,hristova2009time,stefanov2009thermoacoustic,qian2011efficient}.
The second step is to reconstruct the diffusion coefficient $D(x)$ and the absorption coefficient $\mu(x)$
through the internal data $H(x)$ obtained in the first step
\cite{cox2006two,bal2010inverse,laufer2010quantitative,naetar2014quantitative,ren2013hybrid,ABJW, ammari2010mathematical, aspri2020asymptotic, alessandrini2017stability}.
Let us consider the Dirichlet problem in a bounded domain $\Omega\subset\mathbb{R}^n$

\begin{equation}\label{eqn:diffuse}
\left\{\begin{array}{ll}
-\div\big(D(x)\nabla u(x)\big)+\mu(x)u(x) = 0 & \text{in }\Omega,\\
u(x) = g(x) & \text{on } \partial\Omega.
\end{array}\right.
\end{equation}
We need to reconstruct $D(x)$ and $\mu(x)$ in (\ref{eqn:diffuse})
from the knowledge of the coefficients $D(x)$ and $\mu(x)$ on the boundary, the boundary condition $g(x)$,
and the internal data $H(x) = \Gamma(x)\mu(x)u(x)$, 
where $\Gamma(x)$ is the coupling coefficient quantifying the amount of ultrasound generated by photons.
It has been proved in \cite{bal2011multi} that it is impossible to reconstruct $(\Gamma,D,\mu)$
at the same time no matter how many sets of internal data are used for a fixed frequency.
In this paper, we focus on the second step, and we further assume that $\Gamma\equiv 1$.

\vskip .3cm

PAT provides in theory images of optical contrasts and ultrasound resolution. However,
in practice it has been observed in various experiments that the imaging depth, i.e., the maximal depth of the medium at which structures can be resolved at expected resolution, of PAT is still fairly limited, usually on the order of millimeters. This is mainly due to the  fact that  optical waves are  significantly attenuated by absorption and scattering. In fact the generated  optical signal decays very fast in the depth direction. This is indeed a well-known faced  issue in optical tomography~\cite{wang2017photoacoustic}.  Recently in \cite{RenTriki19}, assuming that the medium is layered, the authors  derived a stability estimate showing that  the reconstruction of the optical coefficients  is stable in the region close to the optical illumination source and deteriorate exponentially far away.  Due to the high absorption property of the tissue, the limitation of the equipment, etc., the boundary source $g(x)$ is  in practice confined near the impact zone of the near infra-red photons,   and it is impossible to illuminate the whole tissue or to take the measurement on the whole boundary (see \cite{bonnetier2019stability}
and references therein). Therefore the data $H(x)$ is only reliably provided near the boundary of measurement \cite{burgholzer2007exact,cox2007photoacoustic}.
To our best knowledge, the stability analysis based on partial data of $H(x)$ has not been addressed,
which is the motivation of this paper.

\vskip .3cm
In this paper we first derive a Cauchy problem satisfied by $\sqrt{D}$ 
whose coefficient and source term depend locally on $H(x)$ in Section \ref{sec:local_photoacoustic}.
We prove a H\"older stability of the Cauchy problem in a subregion near the boundary of measurement in Section \ref{sec:stability_cauchy},
which results in a H\"older stability estimation for the reconstruction of $D$ and $\mu$ in Section \ref{sec:stability_Dmu}.
Actually there already exists a H\"older stability estimation for the Cauchy problem inside a subregion away from the boundary;
see for example \cite{klibanov2000carleman,alessandrini2009stability,choulli2016applications,bellassoued2017carleman}.
The main drawback of the existing stability estimation is that the constant inside the upper bound tends to infinity 
while the distance between the subregion and the boundary goes to zero.
We modify their method such that the constant becomes independent of the subregion.
We also propose a choice of the exponent of the  H\"older stability estimation 
which increasingly converges to  a strictly positive constant independent of the subregion as the subregion contracts towards the boundary.
That is, we improve the existing theory to handle a subregion including the boundary of measurement and 
prove that the stability increases as the subregion becomes smaller.
The obtained stability results show that the resolution of PAT 
is better near the impact zone of the optical illumination sources, and deteriorates 
far away. 
At last several numerical experiments on smooth, discrete and realistic media are presented in Section \ref{sec:numeric}. Our algorithm is able to reconstruct all the inhomogeneity accurately.

\section{Local stability estimation}\label{sec:local_photoacoustic}
We consider the problem of reconstructing $D(x)$ and $\mu(x)$ in (\ref{eqn:diffuse}) 
from a set of internal data $H_j(x) = \mu(x)u_j(x)$, $j = 1, 2, \ldots, n+1$, where $u_j(x)$ is the solution to (\ref{eqn:diffuse})
corresponding to the boundary value $g(x) = g_j(x)$. 
Assume that $u_1(x)$ does not vanish inside $\Omega$, then 
 it is easy to verify that $\frac{u_2}{u_1}, \ldots, \frac{u_{n+1}}{u_1}$ satisfy

\begin{equation*}
\left\{\begin{array}{l}
\div\big(\sigma\nabla\tfrac{u_2}{u_1}\big) = 0, \\
\qquad \vdots \\
\div\big(\sigma\nabla\tfrac{u_{n+1}}{u_1}\big) = 0,
\end{array}\right.
\end{equation*}
where $\sigma=Du_1^2$. Otherwise, if 
$\sigma$ is smooth enough, we have

\begin{equation}\label{eqn:sigma}
M \cdot \nabla \ln \sigma = N,\quad 
M = \left(\begin{array}{l}
\big(\nabla\tfrac{u_2}{u_1}\big)^T\\ \quad \vdots \\ \big(\nabla\tfrac{u_{n+1}}{u_1}\big)^T
\end{array}\right),\quad
N = \left(\begin{array}{l}
\Delta\tfrac{u_2}{u_1}\\ \quad \vdots \\ \Delta\tfrac{u_{n+1}}{u_1}
\end{array}\right),
\end{equation}
where $(\cdot)^T$ denotes the transpose of a vector or a matrix.
Since $\frac{u_2}{u_1} = \frac{H_2}{H_1}, \cdots, \frac{u_{n+1}}{u_1} = \frac{H_{n+1}}{H_1}$,
we are able to reconstruct $\nabla\ln\sigma(x)$  locally by solving the linear system (\ref{eqn:sigma})
if the matrix generated by the internal data is nonsingular at $x$. In fact  many internal 
measurements  can be collected in a very short time, and considering 
 $M$ as an  invertible matrix  is indeed a realistic assumption. Notice that  in theory, it  is always possible
to reconstruct $\sigma(x)$ by solving only one linear steady state transport equation  \cite{bonnetier2019stability,triki2020inverse}.   
However, in practice the measurements are noisy, and the transport speed  can posses  critical points with  large multiplicity values  which may generate  severe instabilities in the inversion. 
 
\vskip .3cm
Here we set up a threshold based on the estimation of the noise level and
we formulate the linear system (\ref{eqn:sigma}) in the region where $H_1(x)$ is larger than the threshold.
Such region should be near the set  where $g_1(x)$  is large (the impact zone)  which  mathematically 
is a consequence of the maximum principle and Harnack's inequality \cite{gilbarg2015elliptic}.
Indeed, we shall provide sufficient theoretical conditions that are at the same time consistent with 
experimental observations,  to guarantee the existence of a subregion in which $M$ is nonsingular in Theorem \ref{the:determinant}.
\begin{theorem}\label{the:determinant}
Let $\Omega$ be a $C^{2,1}$ domain, $x^\star\in \partial\Omega$ and $\big(\gamma(x^\star), \tau_1(x^\star), \ldots, \tau_{n-1}(x^\star)\big)$
be the curvilinear coordinates at $x^\star$. Let $g_1, \ldots, g_{n+1}\in C^{2,1}(\partial\Omega)$ be the boundary illuminations satisfying
\begin{itemize}
\item $\eta^{-1} < g_1(x)$ for all $x\in \partial \Omega$, 
\item $\|g_j\|_{C^{2,1}(\partial\Omega)} \leq \eta$,$j = 1, \ldots, n+1$. 
\end{itemize}
Denote $h_j = g_{j+1}/g_1$, $j = 1, \ldots, n$. We further assume
\begin{itemize}
\item $h_1(x)<h_1(x^\star)$ for all $x\in \partial\Omega\setminus \{x^\star\}$, 
\item $\det \big(\nabla_\tau h_2(x^\star), \ldots, \nabla_\tau h_n(x^\star)\big)>\varepsilon$,
\end{itemize}
where $\varepsilon>0$ is a fixed constant and $\nabla_\tau h(x^\star) = 
\big(\nabla h(x^\star)\cdot \tau_1(x^\star), \ldots, \nabla h(x^\star)\cdot \tau_{n-1}(x^\star)\big)^T$.
Consider the set of coefficient 

$$\mathcal{D} = \left\{ (D,\mu) \big| D\in C^2(\overbar{\Omega}), \mu\in C^1(\overbar{\Omega}),
D(x)\geq K^{-1}, \mu(x) \geq K^{-1}, \|D\|_{C^2(\overbar{\Omega})}\leq K,\|\mu\|_{C^1(\overbar{\Omega})} \leq K \right\}$$ with a constant $K>1$. 
Then for $u_j>\eta^{-1}, \, j=1, \cdots n+1,$ in  $ \Omega$,   there exist constants $r_0= r_0(\Omega,n,\mathcal{D},\eta,\varepsilon)>0$ and $C=C(\Omega,n,\mathcal{D},\eta,\varepsilon)\geq 1$ such that $v_j= \frac{u_{j+1}}{u_1},   j=1, \cdots n,$ satisfy

\begin{eqnarray} \label{eqn:determinant}
\det \big( \nabla v_1(x), \ldots, \nabla v_n(x) \big) \ge C^{-1},\\
\left\| \big( \nabla v_1(x), \ldots, \nabla v_n(x) \big)^{-1}\right\|_F \leq C,
\end{eqnarray}
for all $x\in B_{r_0}(x^\star)\cap \Omega$, where  $\|\cdot\|_F$ denotes the Frobenius norm of the matrix.
\end{theorem}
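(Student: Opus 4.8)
The plan is to prove the determinant bound first at the single point $x^\star$, where the geometry of the curvilinear frame lets the matrix be read off explicitly, and then to propagate it to the whole set $B_{r_0}(x^\star)\cap\Omega$ by uniform continuity of $x\mapsto\det\big(\nabla v_1(x),\dots,\nabla v_n(x)\big)$. The two analytic inputs I would isolate are the vanishing of the tangential gradient of $h_1$ at its maximum, and a quantitative Hopf lower bound on the normal derivative of $v_1$.

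First I would record the structure of the matrix at $x^\star$. Expressing each column $\nabla v_j$ in the orthonormal frame $\big(\gamma(x^\star),\tau_1(x^\star),\dots,\tau_{n-1}(x^\star)\big)$, the first (normal) row is $(\partial_\gamma v_1,\dots,\partial_\gamma v_n)$, while the remaining rows are the tangential derivatives, which on the boundary satisfy $\partial_{\tau_i}v_j=\partial_{\tau_i}h_j$ because $v_j=h_j$ on $\partial\Omega$. Since $h_1$ attains a strict maximum over $\partial\Omega$ at $x^\star$, its tangential gradient vanishes there, so $\nabla_\tau h_1(x^\star)=0$ and the first column of the tangential block is zero. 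Expanding the determinant along that column leaves a single surviving term,
\[
\det\big(\nabla v_1(x^\star),\dots,\nabla v_n(x^\star)\big)=\partial_\gamma v_1(x^\star)\,\det\big(\nabla_\tau h_2(x^\star),\dots,\nabla_\tau h_n(x^\star)\big),
\]
whose second factor exceeds $\varepsilon$ by hypothesis, so it remains to bound $\partial_\gamma v_1(x^\star)$ away from zero.

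For this I would use that $v_1=u_2/u_1$ solves the divergence-form equation $\div(\sigma\nabla v_1)=0$ with $\sigma=Du_1^2$. The bounds defining $\mathcal D$, together with $u_j>\eta^{-1}$ and the maximum-principle bound $u_j\le\eta$ (which follows from $\mu\ge K^{-1}>0$ and $\|g_j\|_{C^{2,1}(\partial\Omega)}\le\eta$), pin $\sigma$ between two positive constants depending only on the stated parameters, so the operator is uniformly elliptic. By the maximum principle $v_1$ attains its maximum over $\overbar{\Omega}$ on $\partial\Omega$, and by the strict-maximum hypothesis this maximum is realized only at $x^\star$. Applying Hopf's lemma to $w=v_1(x^\star)-v_1\ge0$, which solves $\div(\sigma\nabla w)=0$ and vanishes at $x^\star$, gives $\partial_\gamma v_1(x^\star)>0$. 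To make this quantitative I would invoke the uniform interior ball condition of the $C^{2,1}$ domain, place a standard exponential barrier on the tangent ball, and control $w$ from below on an interior sphere via Harnack's inequality, obtaining $\partial_\gamma v_1(x^\star)\ge c(\Omega,n,\mathcal D,\eta,\varepsilon)>0$ and hence the determinant bound at $x^\star$.

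Finally, elliptic regularity up to the boundary ($C^{2,1}$ domain, $C^{2,1}$ boundary data, $C^2$ diffusion) yields a uniform bound on $\|v_j\|_{C^2}$ near $x^\star$, so $x\mapsto\det\big(\nabla v_1(x),\dots,\nabla v_n(x)\big)$ is uniformly continuous with a modulus depending only on the stated parameters; choosing $r_0$ so that the determinant stays above half its value at $x^\star$ throughout $B_{r_0}(x^\star)\cap\Omega$ proves the first inequality. The bound on the inverse then follows from the adjugate estimate $\|A^{-1}\|_F\le C_n\|A\|_F^{n-1}/|\det A|$ combined with the uniform upper bound on $\|\nabla v_j\|$ and the lower bound just obtained. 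I expect the main obstacle to be the \emph{quantitative} Hopf step: converting the merely qualitative strict maximum of $h_1$ into a lower bound for $w$ on an interior sphere that is uniform over all admissible coefficients and illuminations, which is precisely where the interior ball condition and a Harnack-type quantitative strong maximum principle are indispensable.
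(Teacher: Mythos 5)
Your proposal follows essentially the same route as the paper's own proof: factor the determinant at $x^\star$ using $\nabla_\tau h_1(x^\star)=0$ (from the strict boundary maximum) together with a quantitative Hopf--Oleinik lower bound on $\partial_\gamma v_1(x^\star)$ for the equation $\div(Du_1^2\nabla v_1)=0$, then extend to $B_{r_0}(x^\star)\cap\Omega$ by uniform continuity coming from elliptic regularity. You are in fact somewhat more explicit than the paper on the two delicate points (the barrier-plus-Harnack quantification of Hopf, which the paper dispatches by a compactness/contradiction sketch, and the adjugate estimate for the Frobenius norm of the inverse), and you correctly flag the uniformity of the Hopf constant over the admissible class as the main remaining issue --- a point the paper's proof also leaves informal.
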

\begin{proof} We deduce from classical elliptic regularity that $u_j\in C^{2,1}(\overline{\Omega})$\cite[Theorem 6.14]{gilbarg2015elliptic}.
The maximum principle implies that the minimum of $u_1(x)$ is achieved 
on $\partial \Omega$. That is  $u_1\geq  \min_{\partial \Omega} g_1 >  \eta^{-1}$  in  $ \Omega$. 
Since $v_1$ satisfies

\begin{equation*}
\left\{\begin{array}{ll}
\div\big(Du_1^2\nabla v_1\big) = 0 & \text{in }\Omega,\\
v_1 = h_1& \text{on } \partial\Omega,
\end{array}\right.
\end{equation*}
we again use the maximum principle to obtain  $\max_{\overbar{\Omega}}v_1(x) = h_1(x^\star)$ \cite{gilbarg2015elliptic, le2018nonlinear}.  On the other hand Hopf-Oleinik Lemma implies that
$\nabla_{\nu} v_1(x^\star) > 0$. Actually, one can  show by  contradiction and 
compactness arguments  that  there exists a constant $c_0 = c_0(\Omega,n, \mathcal{D}, \eta)>0$ such that 
$\nabla_{\nu} v_1(x^\star) \geq  c_0$.
Since $h_1(x)$ reaches its maximum at $x^\star$, we have $\nabla_\tau h_1(x^\star)=0,$
and therefore

\begin{eqnarray*}
f(x^\star) & = & \det \big( \nabla v_1(x^\star), \ldots, \nabla v_n(x^\star) \big) \\
       & = & \det \left(\begin{array}{ccc} \nabla_\nu  v_1(x^\star) & \cdots & \nabla_\nu  v_n(x^\star)\\
										   \nabla_\tau h_1(x^\star) & \cdots & \nabla_\tau h_n(x^\star)\end{array}\right)\\
	   & = & \nabla_\nu v_1(x^\star) \det \big(\nabla_\tau h_2(x^\star), \ldots, \nabla_\tau h_n(x^\star)\big)\\
	   & > & c_0\varepsilon.
\end{eqnarray*}
 Since  $v\in C^{2,1}(\overbar{\Omega})$, we have  $f(x)\in C^{1,1}(\overbar{\Omega})$. The strict positivity and differentiability of $f(x^\star)$  imply the existence of the region $B_{r_0}(x^\star)\cap\Omega$  where  the inequality \ref{eqn:determinant} is fulfilled.
\end{proof}

\begin{note}
In Theorem \eqref{the:determinant}, to guarantee the non-singularity of $M$ locally, we only impose  conditions on $g_j$ and the coefficients. The subregion might be very small since we  use the continuity and the non-singularity of $M$ on one point that belongs to the boundary.
In practice, we setup a threshold and compare the condition number of $M$ with it to detect the non-singular region inside which we are able to do the reconstruction.
\end{note}

Substituting $u_1 = \frac{\sqrt{\sigma}}{\sqrt{D}}$ into (\ref{eqn:diffuse}) results an equation for $\sqrt{D}$

\begin{equation}\label{eqn:D}
-\Delta \sqrt{D}+\frac{\Delta\sqrt{\sigma}}{\sqrt{\sigma}}\sqrt{D} = \frac{H_1}{\sqrt{\sigma}}.
\end{equation}
To be able to handle this problem theoretically, we assume that we know $\sqrt{D}$ and $\partial_\nu\sqrt D$ on a part of the boundary.
That is, we formulate a Cauchy problem for $\sqrt{D}$. Numerically, we propose a much easier method.
We complete the missing region with background value or averaging of existing value and solve (\ref{eqn:D}) once to reconstruct $\sqrt{D}$.
This simple idea works very well for our numerical experiments and the results are very accurate with small relative errors; see Section \ref{sec:numeric} for more details.

\subsection{H\"older stability of the Cauchy problem}\label{sec:stability_cauchy}
Let $\Omega$ be a bounded domain of $\mathbb{R}^n$ with Lipschitz boundary $\partial\Omega$.
Consider the operator $L = -\div (a\nabla\cdot)+b\cdot$ and the Cauchy problem

\begin{equation}\label{eqn:cauchy}
\left\{\begin{array}{ll}
Lv = f   & \text{ in }\Omega,\\
v  = g   & \text{ on }\Gamma,\\
a\partial_{\nu}v = h & \text{ on }\Gamma,
\end{array}\right.
\end{equation}
where $\Gamma$ is a subset of $\partial\Omega$.
We assume that there exists $K>0$ so that 

$$ a(x),b(x)>K^{-1} \text{ for all } x\in\Omega , \text{ and } \|a\|_{C^1(\Omega)},\|b\|_{C(\Omega)}<K.$$ 
Pick $\psi\in C^2(\overbar{\Omega})$ without critical points in $\Omega$ and let $\varphi =e^{\lambda\psi}$.
Let us recall the Carleman estimate for elliptic operators \cite{choulli2016applications, choulli2015triki}.

\begin{proposition} [Carleman inequality]
There exist three strictly positive constants $C$, $\lambda_0$ and $\tau_0$, which depend only on
$\psi, \Omega$ and $K$, so that

\begin{equation}
C\int_{\Omega}\left(\lambda^4\tau^3\varphi^3v^2+\lambda^2\tau\varphi|\nabla v|^2\right)e^{2\tau\varphi} dx 
\le \int_{\Omega}(Lv)^2e^{2\tau\varphi}dx
   +\int_{\partial\Omega}\left(\lambda^3\tau^3\varphi^3v^2+\lambda\tau\varphi|\nabla v|^2\right)e^{2\tau\varphi}d\sigma
\end{equation}
for all $v\in H^2(\Omega), \lambda\ge\lambda_0$ and $\tau\ge\tau_0$.
\end{proposition}

A H\"{o}lder stability of the Cauchy problem (\ref{eqn:cauchy}) has been proved in \cite{alessandrini2009stability,choulli2016applications}.
It has been shown that the $L_2$ norm of the solution $v$ in a closed subregion of $\Omega$ can be bounded by a constant times
terms corresponding to the Cauchy data, the source and the a prior estimation of the solution.
The constant goes to infinity while the closed subregion approaches $\Gamma$ which is a contradiction to intuition.
In the following, we study the stability problem near the boundary $\Gamma$.
\vskip .3cm

We assume that the boundary $\partial\Omega$ satisfies the uniform exterior sphere property (\textbf{UESP}), i.e.,
there exists $\rho>0$ so that, for any $\tilde{x}\in\partial\Omega$, $\exists x_0\in\mathbb{R}\setminus\Omega$ satisfies

$$B(x_0,\rho)\cap\Omega = \emptyset \text{ and } \overbar{B}(x_0,\rho) \cap \overbar{\Omega} = \{\tilde{x}\}.$$
From now on, we fix $\tilde{x}$ to be in the interior of $\Gamma$. 
Let us denote $\Omega(d)=B(x_0,\rho+d)\cap\Omega$. 
The setup of the problem is demonstrated in Figure \ref{fig:cauchy}.
Here $\delta\in (0,1)$ and $\theta\in(0,1)$ are constants independent of $r$.
Since $\tilde{x}$ is in the interior of $\Gamma$, there exists a constant $r_0$ such that, for all $r+\delta<r_0$,
we have $\partial\Omega\cap\partial\Omega(r+\delta) \in \Gamma$.
We will give an upper bound of the solution inside $\Omega(\theta r)$ and study the asymptotic property when $r\rightarrow 0$
in Theorem \ref{the:cauchy_curve}. For the rest of the paper, we use $C$ to denote a constant which may vary from formula to formula
and we will clarify its dependence if necessary. We fix $\lambda\equiv\lambda_0>1$.

\begin{figure}[t]
\centering
\includegraphics[scale=.25]{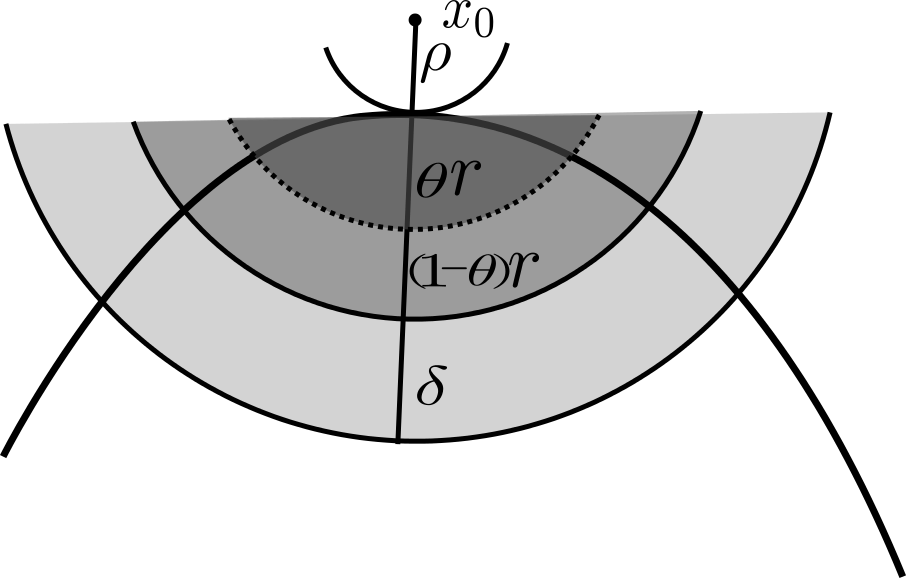}
\caption{Cauchy problem near the boundary of measurement.}
\label{fig:cauchy}
\end{figure}

\begin{theorem}\label{the:cauchy_curve}
There exist two constants $C>0$ and $0<\gamma(r)<1$
so that, for any $v\in H^2(\Omega)$ satisfying (\ref{eqn:cauchy})
with the prior estimation 

$$\|v\|_{H^1(\Omega(r_0))}^2\leq K' \text{ and } \|f\|^2_{L^2(\Omega(r_0))}+\|v\|^2_{L^2(\Gamma)}+\|\nabla v\|^2_{L^2(\Gamma)^n}\leq K',$$
we have

$$C\delta^4 \|v\|^2_{L^2(\Omega(\theta r))} \leq \Big(\|f\|^2_{L^2(\Omega(r_0))}
+\|v\|^2_{L^2(\Gamma)}+\|\nabla v\|^2_{L^2(\Gamma)^n}\Big)^{\gamma(r)},$$
where C is independent of $\delta$ and $r$.
Moreover, a possible choice of $\gamma$ is

\begin{equation}\label{eqn:gamma}
\gamma = \frac{\big((\rho+r)^{2\lambda_0}-(\rho+\theta r)^{2\lambda_0}\big)\rho^{2\lambda_0}}
{\big((\rho+r)^{2\lambda_0}-\rho^{2\lambda_0}\big)(\rho+\theta r)^{2\lambda_0}},
\end{equation}
which is a decreasing function of $r$ for

\begin{equation}\label{eqn:bound_r}
r< \min\left\{
\frac{1}{\theta}, \frac{3(1-\theta)}{(2\lambda_0-1)\big(4^{\lambda_0-1}-1\big)}
\right\}\rho,
\end{equation}
and converges to $1-\theta$ as
$r\rightarrow 0$.
\end{theorem}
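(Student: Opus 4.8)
The plan is to run the classical Carleman-based three-region (propagation-of-smallness) argument, but with a weight adapted to the exterior sphere so that every constant can be tracked explicitly in $r$ and $\delta$. Concretely, I would choose $\psi(x)=-2\ln|x-x_0|$, so that $\varphi=e^{\lambda_0\psi}=|x-x_0|^{-2\lambda_0}$. Since $x_0\notin\overbar\Omega$, $\psi$ has no critical point in $\Omega$ and the Carleman inequality applies with constants $C,\lambda_0,\tau_0$ depending only on $\psi,\Omega,K$ and, crucially, \emph{not} on $r$. The weight $\varphi$ is radial and strictly decreasing in $|x-x_0|$, so on the three relevant spheres it takes the values $\varphi_0:=\rho^{-2\lambda_0}$ (on $\Gamma$, where $|x-x_0|=\rho$), $\varphi_1:=(\rho+\theta r)^{-2\lambda_0}$ (on $\partial\Omega(\theta r)$), and $\varphi_2:=(\rho+r)^{-2\lambda_0}$, with $\varphi_0>\varphi_1>\varphi_2>0$.

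Next I would introduce a cutoff $\chi$ with $\chi\equiv 1$ on $\Omega(r)$, $\chi\equiv 0$ off $\Omega(r+\delta)$, and $|\nabla\chi|\lesssim\delta^{-1}$, $|\Delta\chi|\lesssim\delta^{-2}$, and apply the Carleman inequality to $W=\chi v$. Since $LW=\chi f+[L,\chi]v$ with the commutator supported in the annulus $\mathcal A=\Omega(r+\delta)\setminus\Omega(r)$ (the zeroth-order term $b$ drops out of $[L,\chi]v$), the right-hand side splits into three pieces: the source $\int\chi^2 f^2 e^{2\tau\varphi}\le e^{2\tau\varphi_0}\|f\|_{L^2(\Omega(r_0))}^2$; the boundary term, which because $\operatorname{supp}\chi\cap\partial\Omega\subset\Gamma$ is controlled by the Cauchy data and is $\le C e^{2\tau\varphi_0}(\|v\|_{L^2(\Gamma)}^2+\|\nabla v\|_{L^2(\Gamma)^n}^2)$; and the commutator $\int_{\mathcal A}|[L,\chi]v|^2 e^{2\tau\varphi}\le C\delta^{-4} e^{2\tau\varphi_2}\,\|v\|_{H^1(\Omega(r_0))}^2\le C\delta^{-4}e^{2\tau\varphi_2}K'$, where the $\delta^{-4}$ records $|\nabla\chi|^2$ and $|\Delta\chi|^2$ (and $\delta<1$ absorbs the $\delta^{-2}$ term) and the weight is bounded by its maximum $\varphi_2$ on $\mathcal A$. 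On the left, restricting the Carleman integral to the target $\Omega(\theta r)$, where $\chi\equiv1$ and $\varphi\ge\varphi_1$, gives the lower bound $C\tau^3\varphi_1^3 e^{2\tau\varphi_1}\|v\|_{L^2(\Omega(\theta r))}^2$.

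Writing $\mathcal E:=\|f\|_{L^2(\Omega(r_0))}^2+\|v\|_{L^2(\Gamma)}^2+\|\nabla v\|_{L^2(\Gamma)^n}^2$ and dividing by $\tau^3\varphi_1^3 e^{2\tau\varphi_1}$ (bounded below for $\tau\ge\tau_0$) yields the master inequality $\|v\|_{L^2(\Omega(\theta r))}^2\le C e^{2\tau(\varphi_0-\varphi_1)}\mathcal E+C\delta^{-4}e^{-2\tau(\varphi_1-\varphi_2)}K'$, valid for all $\tau\ge\tau_0$. I would then optimize in $\tau$: balancing the two exponentials at $\tau^\star=(2(\varphi_0-\varphi_2))^{-1}\ln(\cdots)$ produces $\|v\|_{L^2(\Omega(\theta r))}^2\le C(\delta^{-4}K')^{1-\gamma}\mathcal E^{\gamma}$ with $\gamma=\frac{\varphi_1-\varphi_2}{\varphi_0-\varphi_2}\in(0,1)$; since $1-\gamma<1$ and $K'$ is a fixed prior bound, this is $\le C\delta^{-4}\mathcal E^{\gamma}$, i.e. $C\delta^4\|v\|_{L^2(\Omega(\theta r))}^2\le\mathcal E^{\gamma}$ with $C$ independent of $\delta,r$ (the usual case split $\mathcal E$ large / small disposes of the constraint $\tau^\star\ge\tau_0$). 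Substituting $\varphi_0,\varphi_1,\varphi_2$ into $\frac{\varphi_1-\varphi_2}{\varphi_0-\varphi_2}$ and clearing the common denominator $\rho^{-2\lambda_0}(\rho+\theta r)^{-2\lambda_0}(\rho+r)^{-2\lambda_0}$ turns it into exactly \eqref{eqn:gamma}.

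Finally, the ``moreover'' assertions are pure calculus on the explicit $\gamma(r)$: a first-order Taylor expansion of $(\rho+r)^{2\lambda_0}$, $(\rho+\theta r)^{2\lambda_0}$ and $\rho^{2\lambda_0}$ about $r=0$ gives $\gamma\to 1-\theta$, while differentiating \eqref{eqn:gamma} and showing $\gamma'(r)<0$ under \eqref{eqn:bound_r} is a direct, if tedious, monotonicity computation. I expect the main obstacle to be not the Carleman machinery itself but the bookkeeping that makes $C$ independent of $r$: this hinges on fixing the $r$-independent weight $\psi=-2\ln|x-x_0|$ (so that $C,\lambda_0,\tau_0$ never degenerate as the region shrinks) and on isolating all $r$- and $\delta$-dependence into the three explicit weight values $\varphi_0,\varphi_1,\varphi_2$ and the single factor $\delta^{-4}$ — precisely the point where the argument improves on the cited estimates, whose constant blows up as the subregion approaches $\Gamma$.
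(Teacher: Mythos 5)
Your proposal follows essentially the same route as the paper's proof: the same radial logarithmic weight (the paper merely normalizes $\psi$ by the constant $(\rho+r_0)^2$, which only rescales $\tau$), the same cutoff with $\delta^{-4}$ bookkeeping, the same three extremal weight values on $\Omega(\theta r)$, on the annulus $\Omega(r+\delta)\setminus\Omega(r)$ and on $\overline{\Omega}$, and the same optimization in $\tau$ with the case split on $\tau_0$, yielding $\gamma=(\varphi_1-\varphi_2)/(\varphi_0-\varphi_2)$, which after clearing denominators is exactly \eqref{eqn:gamma}. The one step you defer as ``tedious calculus'' --- the monotonicity of $\gamma$ under \eqref{eqn:bound_r} --- is where the paper invests a page of third-order Taylor expansions with Lagrange remainders, but since the threshold \eqref{eqn:bound_r} is given in the statement rather than to be discovered, verifying it is indeed routine and your sketch is sound.
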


\begin{proof}
Define

$$\psi(x) = \ln\big((\rho+r_0)^2/|x-x_0|^2\big).$$ 
Then 

$$|\nabla\psi(x)| = \frac{2}{|x-x_0|} \ge \frac{2}{\rho+r_0}>0,\ \text{ for all } x\in\Omega(r+\delta).$$
That is to say, $\Psi$ satisfies the non-critical-point condition.

Let $\chi\in C^\infty(\overbar\Omega)$, $\chi=1$ in $\overbar\Omega(r)$ and $\chi=0$ in $\overbar{\Omega}\setminus\overbar{\Omega}(r+\delta)$. 
Therefore $\partial^{\alpha}\chi \leq K"\delta^{-|\alpha|}$, $|\alpha|\le 2$,
where $K"$ is a constant independent of $\delta$.
Applying the Carleman inequality to $u=\chi v$ in $\Omega(r+\delta)$, we obtain

\begin{equation}\label{eqn:carleman_bound}
C\int_{\Omega(\theta r)} |v|^2 e^{2\tau\varphi}dx 
\leq \int_{\Omega(r+\delta)} (L(\chi v))^2 e^{2\tau\varphi}dx 
	+\int_{\Gamma} \big((\chi v)^2+|\nabla(\chi v)|^2\big)e^{2\tau\varphi} d\sigma,
\end{equation} 
where 

$$\varphi = \frac{(\rho+r_0)^{2\lambda_0}}{|x-x_0|^{2\lambda_0}}$$
and $C$ depends only on $\Omega, K, \rho, r_0, \lambda_0$ and $\tau_0$.

Using $L(\chi v) = \div(a\nabla\chi)v+2a\nabla\chi\cdot\nabla v+\chi f$ and the estimates on $\chi$ and its derivatives,
we obtain

\begin{equation}\label{eqn:carleman_rhs1}
\int_{\Omega(r+\delta)} (L(\chi v))^2 e^{2\tau\varphi}dx \le C\int_{\Omega(r_0)} f^2 e^{2\tau\varphi}dx
	+\frac{C}{\delta^4}\int_{\Omega(r+\delta)\setminus\Omega(r)}\big(v^2+|\nabla v|^2\big)e^{2\tau\varphi} dx,
\end{equation}
where $C$ depends only on $K$ and $K"$.
For the second term of the right hand side of (\ref{eqn:carleman_bound}), we have

\begin{equation}\label{eqn:carleman_rhs2}
\int_{\Gamma} \big((\chi v)^2+|\nabla(\chi v)|^2\big)e^{2\tau\varphi} d\sigma \leq 
\frac{C}{\delta^2}\int_{\Gamma} \big(v^2+|\nabla v|^2\big)e^{2\tau\varphi} d\sigma,
\end{equation}
where $C$ depends only on $K"$.
Combining (\ref{eqn:carleman_bound})-(\ref{eqn:carleman_rhs2}) results

\begin{equation}\label{eqn:bound_L2v}
C\delta^4\int_{\Omega(\theta r)} |v|^2 e^{2\tau\varphi}dx \leq 
\int_{\Omega(r_0)} f^2 e^{2\tau\varphi}dx+\int_{\Gamma} \big(v^2+|\nabla v|^2\big)e^{2\tau\varphi}d\sigma
+\int_{\Omega(r+\delta)\setminus\Omega(r)}\big(v^2+|\nabla v|^2\big)e^{2\tau\varphi} dx,
\end{equation}
where $C$ depends only on $\Omega, K, K", \rho, r_0, \lambda_0$ and $\tau_0$.

Define

$$\varphi_0 = \frac{(\rho+r_0)^{2\lambda_0}}{(\rho+\theta r)^{2\lambda_0}},\ \ 
  \varphi_1 = \frac{(\rho+r_0)^{2\lambda_0}}{(\rho+r)^{2\lambda_0}}\ \text{ and }\  
  \varphi_2 = \frac{(\rho+r_0)^{2\lambda_0}}{\rho^{2\lambda_0}}.$$
Then $\varphi \ge \varphi_0$ in $\Omega(\theta r)$, $\varphi \le \varphi_1$ in $\Omega(r+\delta)\setminus\Omega(r)$
and  $\varphi \le \varphi_2$ in $\overbar\Omega$.
Substituting these estimations into (\ref{eqn:bound_L2v}) results

\begin{equation}\label{eqn:upbound1}
C\delta^4 \|v\|^2_{L^2(\Omega(\theta r))} \leq e^{-\tau\alpha(r;\theta)}\|v\|^2_{H^1(\Omega(r_0))}
	+e^{\tau\beta(r;\theta)}\Big(\|f\|^2_{L^2(\Omega(r_0))}+\|v\|^2_{L^2(\Gamma)}+\|\nabla v\|^2_{L^2(\Gamma)^n}\Big),
\end{equation}
where

$$\alpha(r;\theta) = 2(\varphi_0-\varphi_1),\ \beta(r;\theta) = 2(\varphi_2-\varphi_0). $$
For simplicity, let us denote

$$\mathcal{A} = \|v\|^2_{H^1(\Omega(r_0))},\ 
\mathcal{B} = \|f\|^2_{L^2(\Omega(r_0))}+\|v\|^2_{L^2(\Gamma)}+\|\nabla v\|^2_{L^2(\Gamma)^n}
\text{ and } \mathcal{F}(\tau) = e^{-\tau\alpha}\mathcal{A}+e^{\tau\beta}\mathcal{B}.$$

By calculating the derivative, it is easy to show that $\mathcal{F}(\tau)$ first decreases and then increases as
$\tau$ goes from $0$ to infinity, and $\mathcal{\mathcal{F}}$ obtains its minimum at 

$$\tilde\tau=\frac{\ln\frac{\alpha \mathcal{A}}{\beta \mathcal{B}}}{\alpha+\beta}.$$
If $\tau_0\leq\tilde\tau$, we can take $\tau=\tilde\tau$, and in this case

\begin{equation}\label{eqn:Fcase1}
\mathcal{F}(\tilde{\tau}) = \Big(\big(\tfrac{\alpha}{\beta}\big)^{-\frac{\alpha}{\alpha+\beta}}+
\big(\tfrac{\alpha}{\beta}\big)^{\frac{\beta}{\alpha+\beta}}\Big)
\mathcal{A}^{\frac{\beta}{\alpha+\beta}}\mathcal{B}^{\frac{\alpha}{\alpha+\beta}}.
\end{equation}
If $\tau_0>\tilde\tau$, that is, $e^{-\tau_0\alpha}\mathcal{A}<\frac{\beta}{\alpha}e^{\tau_0\beta}\mathcal{B}$, we have

\begin{equation}\label{eqn:Fcase2}
\mathcal{F}(\tau_0) \leq \big(1+\tfrac{\beta}{\alpha}\big)e^{\tau_0\beta}
\mathcal{B}^{\frac{\beta}{\alpha+\beta}}\mathcal{B}^{\frac{\alpha}{\alpha+\beta}}.
\end{equation}
To obtain a suitable upper bound of (\ref{eqn:Fcase1}) and (\ref{eqn:Fcase2}) independent of $r$,
we will study the monotonicity and bound of $\beta$, $\frac{\alpha}{\beta}$ and $\frac{\alpha}{\alpha+\beta}$ as $r\rightarrow 0$.

By the mean value theorem, there exists $\eta\in (0,1)$ such that

\begin{eqnarray*}
\beta & = & 2(\rho+r_0)^{2\lambda_0}\left(\frac{1}{\rho^{2\lambda_0}}-\frac{1}{(\rho+\theta r)^{2\lambda_0}}\right)\\
      & = & \theta r\frac{4\lambda_0(\rho+r_0)^{2\lambda_0}}{\big(\rho+\theta(1-\eta)r\big)^{2\lambda_0+1}},
\end{eqnarray*}
and therefore

$$\beta \leq \theta r_0\frac{4\lambda_0(\rho+r_0)^{2\lambda_0}}{\rho^{2\lambda_0+1}}.$$

Since

$$\frac{\alpha}{\beta} = \frac{1-\tfrac{\varphi_1}{\varphi_0}}{\tfrac{\varphi_2}{\varphi_0}-1} 
= \frac{1-\big(\tfrac{\rho+\theta r}{\rho+r}\big)^{2\lambda_0}}{\big(\tfrac{\rho+\theta r}{\rho}\big)^{2\lambda_0}-1},$$
it is easy to show

$$\lim_{r\rightarrow 0}\frac{\alpha}{\beta} = \frac{1}{\theta}-1.$$
The derivative of $\frac{\alpha}{\beta}$ satisfies

\begin{eqnarray*}
\Big(\frac{\alpha}{\beta}\Big)' & \propto & 2\lambda_0\Big(\tfrac{\rho+\theta r}{\rho+r}\Big)^{2\lambda_0-1}\tfrac{(1-\theta)\rho}{(\rho+r)^2}\Big(\Big(\tfrac{\rho+\theta r}{\rho}\Big)^{2\lambda_0}-1\Big)
-2\lambda_0\Big(\tfrac{\rho+\theta r}{\rho}\Big)^{2\lambda_0-1}\tfrac{\theta}{\rho}
\Big(1-\Big(\tfrac{\rho+\theta r}{\rho+r}\Big)^{2\lambda_0}\Big),\\
& \propto & \big((\rho+\theta r)^{2\lambda_0}\rho-\rho^{2\lambda_0+1}\big)(1-\theta)-
\big((\rho+r)^{2\lambda_0+1}-(\rho+\theta r)^{2\lambda_0}(\rho+r)\big)\theta.
\end{eqnarray*}
By Taylor expansions, there exist $\xi_1, \xi_2, \xi_3 \in (0,1)$ such that

\begin{eqnarray*}
(\rho+\theta r)^{2\lambda_0} & = & \rho^{2\lambda_0}+2\lambda_0\rho^{2\lambda_0-1}\theta r+\lambda_0(2\lambda_0-1)\rho^{2\lambda_0-2}\theta^2 r^2\\
& & +\frac{1}{3}\lambda_0(2\lambda_0-1)(2\lambda_0-2)(\rho+\xi_1\theta r)^{2\lambda_0-3}\theta^3 r^3,\\
(\rho+r)^{2\lambda_0+1} & = & \rho^{2\lambda_0+1}+(2\lambda_0+1)\rho^{2\lambda_0}r+(2\lambda_0+1)\lambda_0\rho^{2\lambda_0-1}r^2\\
& & +\frac{1}{3}(2\lambda_0+1)\lambda_0(2\lambda_0-1)(\rho+\xi_2 r)^{2\lambda_0-2} r^3,\\
(\rho+\theta r)^{2\lambda_0} & = & \rho^{2\lambda_0}+2\lambda_0\rho^{2\lambda_0-1}\theta r
+\lambda_0(2\lambda_0-1)(\rho+\xi_3\theta r)^{2\lambda_0-2}\theta^2 r^2.
\end{eqnarray*}
By substituting $\xi_2 =0$ and $\xi_1=\xi_3=1$, we obtain the following upper bound,

\begin{eqnarray*}
\Big(\frac{\alpha}{\beta}\Big)' & \propto & \big((\rho+\theta r)^{2\lambda_0}\rho-\rho^{2\lambda_0+1}\big)+
\big(\rho^{2\lambda_0+1}-(\rho+r)^{2\lambda_0+1}+(\rho+\theta r)^{2\lambda_0}r\big)\theta,\\
& < & -(2\lambda_0+1)\lambda_0\rho^{2\lambda_0-1}(1-\theta)\theta r^2
	+\frac{1}{3}(2\lambda_0+1)\lambda_0(2\lambda_0-1)\big((\rho+\theta r)^{2\lambda_0-2}-\rho^{2\lambda_0-2}\big)\theta r^3.
\end{eqnarray*}
To obtain $\big(\frac{\alpha}{\beta}\big)'<0$, we only need

\begin{equation*}
\frac{1}{3}(2\lambda_0+1)\lambda_0(2\lambda_0-1)\big((\rho+\theta r)^{2\lambda_0-2}-\rho^{2\lambda_0-2}\big)\theta r^3
\ <\ (2\lambda_0+1)\lambda_0\rho^{2\lambda_0-1}(1-\theta)\theta r^2,
\end{equation*}
which is

\begin{equation}\label{eqn:bound_r1}
r < \frac{3(1-\theta)\rho}{(2\lambda_0-1)\Big(\big(1+\theta\frac{r}{\rho}\big)^{2\lambda_0-2}-1\Big)}.
\end{equation}
Assume that $r<\frac{\rho}{\theta}$. Then (\ref{eqn:bound_r1}) is satisfied if 

$$r < \frac{3(1-\theta)}{(2\lambda_0-1)\big(4^{\lambda_0-1}-1\big)}\rho.$$
To summarize, for

\begin{equation*}
r< \min\left\{
\frac{1}{\theta}, \frac{3(1-\theta)}{(2\lambda_0-1)\big(4^{\lambda_0-1}-1\big)}
\right\}\rho
\end{equation*}
$\tfrac{\alpha}{\beta}$ is a decreasing function of $r$ and 

$$\lim_{r\rightarrow 0}\frac{\alpha}{\beta} = \frac{1}{\theta}-1.$$
Therefore, $\tfrac{\alpha}{\alpha+\beta}$ is also a decreasing function of $r$ and 

$$\lim_{r\rightarrow 0}\frac{\alpha}{\alpha+\beta} = 1-\theta.$$

We have proved that $\beta, \tfrac{\alpha}{\beta}$ and $\tfrac{\alpha}{\alpha+\beta}$ can be lower and upper bounded by
positive constants depending only on $\theta, \rho, K', \lambda_0$ and $\tau_0$ if $r$ satisfies (\ref{eqn:bound_r}).
Combining (\ref{eqn:upbound1})-(\ref{eqn:Fcase2}), we have the conclusion 

\begin{equation}
C\delta^4 \|v\|^2_{L^2(\Omega(\theta r))} \leq \Big(\|f\|^2_{L^2(\Omega(r_0))}+\|v\|^2_{L^2(\Gamma)}+\|\nabla v\|^2_{L^2(\Gamma)^n}\Big)^{\gamma(r)},
\end{equation}
where $C$ is independent of $r$ and $\delta$.
For $r$ satisfying (\ref{eqn:bound_r}), 
 $\gamma(r)=\frac{\alpha}{\alpha+\beta}$ is a decreasing function of $r$,
which converges to $1-\theta$ as $r\rightarrow 0$.
\end{proof}

\subsection{H\"older stability to reconstruct $D$ and $\mu$} \label{sec:stability_Dmu}
Assume that $\Omega$ is a bounded domain with $C^{3,1}$ boundary.
Let us consider a set of coefficients

$$\mathcal{C} = \left\{(D,\mu)\ \big\vert\ D(x),\ \mu(x)> \kappa^{-1} \text{ for all } x\in\overbar{\Omega}, \quad
\|D\|_{C^{2,1}(\overbar{\Omega})},\ \|\mu\|_{C^3(\overbar{\Omega})}<\kappa \right\}$$
for a constant $\kappa>1$, a set of boundary conditions

$$\mathcal{G}=\left\{g|g\in C^{3,1}(\partial\Omega),\ g(x)\ge 0 \text{ for all } x\in\partial\Omega\right\},$$
and a subregion $\Omega(r_0)$ defined in the previous section with $\Gamma:=\partial\Omega\cap\partial\Omega(r_0)$. For $g\in \mathcal{G},$  we deduce from Shauder elliptic 
regularity  \cite[Theorem 6.14 and 6.19]{gilbarg2015elliptic}, that  (\ref{eqn:diffuse}) has a unique solution $u\in C^3(\overbar\Omega)$. 
\vskip .3cm
To study the stability of the inverse problem, we choose $(D,\mu)$ and $(D',\mu')$ from $\mathcal{C}$ and 
solve (\ref{eqn:diffuse}) with the same boundary conditions $\big\{g_j\big\}_{j=1}^{n+1}\subset \mathcal{G}$ to obtain
$\big\{H_j\big\}_{j=1}^{n+1}$ and $\big\{H'_j\big\}_{j=1}^{n+1}$ respectively.
Moreover, we assume that $D\equiv D'$ and $\partial_\nu D \equiv \partial_\nu D'$ on $\Gamma$.
The following lemma provides a piecewise stability estimation to reconstruct functions related to $\sigma(x)$.

\begin{lemma}\label{lem:sigma} Let $c>1$ be fixed,  and assume that $H_1(x), H'_1(x) > c $, $M(x)$ and $M'(x)$  defined in \eqref{eqn:sigma} are invertible, and 
$\|M^{-1}(x)\|_F, \|M(x)\|_F \leq c, $  for all $x\in \overbar\Omega(r_0)$,
where $\|\cdot\|_F$ denotes the Frobenius norm of the matrix.
Assume also that $H_1(x), H'_1(x) > c > 0$ for a constant $c$ and for all $x\in \Omega(r_0)$.
Then there exists a strictly positive constant $C$ such that

\begin{eqnarray}
C\|\nabla\ln\sigma(x)-\nabla\ln\sigma'(x)\|_2^2 & \leq & 
	\sum_{j=1}^{n+1}\|H_j-H'_j\|^2_{C^2(\overbar{\Omega}(r_0))},\label{eqn:bound_sigma1}\\
C\big|\div\big(\nabla\ln\sigma(x)-\nabla\ln\sigma'(x)\big)\big|^2 & \leq &
	\sum_{j=1}^{n+1}\|H_j-H'_j\|^2_{C^3(\overbar{\Omega}(r_0))}.\label{eqn:bound_sigma2}
\end{eqnarray}
Meanwhile, if 

$$\sum_{j=1}^{n+1}\|H_j-H'_j\|^2_{C^2(\overbar{\Omega}(r_0))}\ll 1,$$
we also have

\begin{equation}\label{eqn:bound_sigma3}
C|\sigma(x)-\sigma'(x)|^2 \ \leq \ \sum_{j=1}^{n+1}\|H_j-H'_j\|^2_{C^2(\overbar{\Omega}(r_0))}.
\end{equation}
\end{lemma}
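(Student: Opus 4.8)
The plan is to exploit the identity $\nabla\ln\sigma=M^{-1}N$ coming from (\ref{eqn:sigma}), together with the crucial observation that $M$ and $N$ depend only on the measured data: since $H_j=\mu u_j$, the factor $\mu$ cancels in the ratios $v_j=u_{j+1}/u_1=H_{j+1}/H_1$, so the entries of $M$ (the $\nabla v_j$) and of $N$ (the $\Delta v_j$) are explicit functions of $H_1,\dots,H_{n+1}$ alone. I would first record the a priori bounds: by Schauder regularity each $u_j\in C^3(\overbar{\Omega})$ and hence $H_j=\mu u_j\in C^3(\overbar{\Omega})$ is bounded, while $\sigma=Du_1^2\in C^{2,1}(\overbar{\Omega})$ is bounded above and below (the lower bound for $u_1$ follows from $H_1>c$ and $\mu\le\kappa$), so that $\ln\sigma$ is well defined and $C^{2,1}$. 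The same holds for the primed quantities.

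For (\ref{eqn:bound_sigma1}) I would write $\nabla\ln\sigma-\nabla\ln\sigma'=M^{-1}N-(M')^{-1}N'$ and split it using the resolvent identity
\[
M^{-1}N-(M')^{-1}N'=M^{-1}(N-N')+M^{-1}(M'-M)(M')^{-1}N'.
\]
The invertibility hypothesis $\|M^{-1}\|_F,\|(M')^{-1}\|_F\le c$ controls the inverse factors, and $N'$ is bounded by the a priori estimates. It then remains to bound $\|M-M'\|_F$ and $\|N-N'\|_2$ pointwise. Because $v_j=H_{j+1}/H_1$ with $H_1,H_1'>c$, the quotient rule gives a Lipschitz estimate $\|v_j-v_j'\|_{C^2}\le C(\|H_{j+1}-H'_{j+1}\|_{C^2}+\|H_1-H_1'\|_{C^2})$; then $\|M-M'\|_F$ involves $\nabla(v_j-v_j')$ and $\|N-N'\|_2$ involves $\Delta(v_j-v_j')$, both controlled by the $C^2$ norms of the data differences. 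Collecting terms yields the pointwise bound $\|\nabla\ln\sigma-\nabla\ln\sigma'\|_2\le C\sum_j\|H_j-H_j'\|_{C^2(\overbar{\Omega}(r_0))}$, valid throughout $\overbar{\Omega}(r_0)$ since invertibility is assumed there; squaring and applying Cauchy--Schwarz gives (\ref{eqn:bound_sigma1}).

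For (\ref{eqn:bound_sigma2}) I would differentiate $M^{-1}N-(M')^{-1}N'$ once more to form the divergence, using $\partial(M^{-1})=-M^{-1}(\partial M)M^{-1}$. This produces the same type of products but with one extra derivative, so that $\partial M$ and $\partial N$ now carry third-order derivatives of the $v_j$, hence of the $H_j$; the bounded-inverse and a priori bounds again absorb the algebraic factors, and the difference is controlled by $\sum_j\|H_j-H_j'\|_{C^3(\overbar{\Omega}(r_0))}$. Finally, for (\ref{eqn:bound_sigma3}) I would invoke the Cauchy data: the $g_j$ are common to both problems, so on $\Gamma\subset\partial\Omega$ we have $u_1=g_1=u_1'$, and since $D=D'$ on $\Gamma$ it follows that $\sigma=Du_1^2=D'(u_1')^2=\sigma'$ on $\Gamma$, i.e. $w:=\ln\sigma-\ln\sigma'$ vanishes on $\Gamma$. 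Integrating the pointwise gradient bound from (\ref{eqn:bound_sigma1}) along a path from $\Gamma$ into $\Omega(r_0)$ gives $|w(x)|\le C\sum_j\|H_j-H_j'\|_{C^2}$. Writing $\sigma-\sigma'=\sigma'(e^{w}-1)$ and linearizing the exponential — legitimate precisely because the smallness hypothesis $\sum_j\|H_j-H_j'\|^2_{C^2}\ll 1$ forces $|w|$ small — together with the upper bound on $\sigma'$ yields (\ref{eqn:bound_sigma3}) after squaring.

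The main obstacles I anticipate are twofold. The lighter one is the polynomial bookkeeping in (\ref{eqn:bound_sigma2}): one must check that every term produced by differentiating $M^{-1}(\cdot)(M')^{-1}N'$ is a product of factors that are either bounded a priori or bounded by the stated invertibility hypothesis, with the data differences entering only through $\|H_j-H_j'\|_{C^3}$. The more delicate one is the integration step in (\ref{eqn:bound_sigma3}): I must ensure that every point of $\Omega(r_0)$ can be joined to $\Gamma$ by a path of uniformly bounded length lying in the region where (\ref{eqn:bound_sigma1}) holds, so that the constant in $|w(x)|\le C\sum_j\|\cdot\|_{C^2}$ is genuinely independent of $x$; this relies on the geometry of the spherical cap $\Omega(r_0)=B(x_0,\rho+r_0)\cap\Omega$ and on $\Gamma$ being its inner boundary portion.
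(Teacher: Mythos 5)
Your proposal is correct and follows essentially the same route as the paper: the resolvent-type splitting $M^{-1}(N-N')+M^{-1}(M'-M)(M')^{-1}N'$ is identical to the paper's decomposition $-M^{-1}(M-M')\nabla\ln\sigma'+M^{-1}(N-N')$ since $(M')^{-1}N'=\nabla\ln\sigma'$, the divergence of this identity gives (\ref{eqn:bound_sigma2}), and the integration along a path from $\Gamma$ followed by linearizing $e^{w}-1$ gives (\ref{eqn:bound_sigma3}). Your remark about needing uniformly bounded paths from $\Gamma$ inside $\Omega(r_0)$ is a point the paper glosses over, so your version is if anything slightly more careful.
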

\begin{proof}
Since

$$\left\{\begin{array}{lcl}
M(x)\cdot \nabla\ln\sigma(x) & = & N(x), \\ M'(x)\cdot \nabla\ln\sigma'(x) & = & N'(x),
\end{array}\right.$$
we have

\begin{equation}\label{eqn:nabla_ln_sig_equal}
\nabla\ln\sigma(x)-\nabla\ln\sigma'(x) = -M^{-1}(x)\big(M(x)-M'(x)\big)\nabla\ln\sigma'(x)+M^{-1}(x)\big(N(x)-N'(x)\big)
\end{equation}
and therefore

\begin{equation}\label{eqn:nabla_ln_sig_bound}
C\|\nabla\ln\sigma(x)-\nabla\ln\sigma'(x)\|^2_2 \ \le \ \|M(x)-M'(x)\|_F^2 + \|N(x)-N'(x)\|_2^2.
\end{equation}
Let us recall

$$M(x)-M'(x) = \left(\begin{array}{c}
\nabla\Big(\tfrac{H_2(x)}{H_1(x)}-\tfrac{H'_2(x)}{H'_1(x)}\Big)^T\\ \quad \vdots \\ 
\nabla\Big(\tfrac{H_{n+1}(x)}{H_1(x)}-\tfrac{H'_{n+1}(x)}{H'_1(x)}\Big)^T
\end{array}\right)\ \text{ and } \ 
N(x)-N'(x) = \left(\begin{array}{c}
\Delta\Big(\tfrac{H_2(x)}{H_1(x)}-\tfrac{H'_2(x)}{H'_1(x)}\Big)\\ \quad \vdots \\ 
\Delta\Big(\tfrac{H_{n+1}(x)}{H_1(x)}-\tfrac{H'_{n+1}(x)}{H'_1(x)}\Big)
\end{array}\right).$$
Therefore we have the following estimations

\begin{eqnarray*}
C\|M(x)-M'(x)\|_F^2 & \leq & \sum_{j=1}^{n+1}\|H_j-H'_j\|^2_{C^1(\overbar{\Omega}(r_0))},\\
C\|N(x)-N'(x)\|_2^2 & \leq & \sum_{j=1}^{n+1}\|H_j-H'_j\|^2_{C^2(\overbar{\Omega}(r_0))}.
\end{eqnarray*}
Combining with (\ref{eqn:nabla_ln_sig_bound}), we obtain (\ref{eqn:bound_sigma1}).
Taking the divergence of (\ref{eqn:nabla_ln_sig_equal}), and following the same procedure,
we arrive at (\ref{eqn:bound_sigma2}).

Through integration along a curve $l\in \Omega(r_0)$ connecting a boundary point $x_0\in\Gamma$ and $x$, we obtain

$$\left\{\begin{array}{lcl}
\ln\sigma(x) & = & \int_l\nabla\ln\sigma(x)\cdot d\v{l}+\ln\sigma(x_0), \\ 
\ln\sigma'(x) & = & \int_l\nabla\ln\sigma'(x)\cdot d\v{l}+\ln\sigma'(x_0). 
\end{array}\right.$$
Subtracting one equality by another results

$$C|\ln\sigma(x)-\ln\sigma'(x)|^2\ \leq \ \sum_{j=1}^{n+1}\|H_j-H'_j\|^2_{C^2(\overbar{\Omega}(r_0))}.$$
Since $\sigma-\sigma' = \sigma\big(1-e^{\ln\sigma'-\ln\sigma}\big)$, 
we can apply the Taylor expansion of $e^x$ to obtain (\ref{eqn:bound_sigma3}) if 

$$\sum_{j=1}^{n+1}\|H_j-H'_j\|^2_{C^2(\overbar{\Omega}(r_0))}\ll 1.$$
\end{proof}
This lemma leads to the main result of this paper.
\begin{theorem}\label{the:bound_D}
Let us choose $(D,\mu)$ and $(D',\mu')$ from $\mathcal{C}$ such that $D\equiv D'$ and $\partial_\nu D \equiv \partial_\nu D'$ on $\Gamma$.
Assume that the data set $\big\{H_j\big\}_{j=1}^{n+1}$ and $\big\{H'_j\big\}_{j=1}^{n+1}$ satisfy all assumptions in Lemma \ref{lem:sigma}.
Then we have

\begin{equation}\label{eqn:bound_D}
C\|D-D'\|_{L^2(\Omega(\theta r))} \ \leq \ \left(\sum_{j=1}^{n+1}\|H_j-H'_j\|^2_{C^3(\overbar{\Omega}(r_0))}\right)^{\gamma(r)},
\end{equation}
where $\gamma(r)$ is given by (\ref{eqn:gamma}).
\end{theorem}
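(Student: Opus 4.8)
The plan is to reduce the estimate to an application of Theorem \ref{the:cauchy_curve} applied to the difference $w = \sqrt D - \sqrt{D'}$. First I would note that both $\sqrt D$ and $\sqrt{D'}$ satisfy equation (\ref{eqn:D}) with their respective data, namely $-\Delta\sqrt D + \tfrac{\Delta\sqrt\sigma}{\sqrt\sigma}\sqrt D = H_1/\sqrt\sigma$ and the analogous identity with primes. Subtracting these while collecting everything around the single operator $L = -\Delta + \tfrac{\Delta\sqrt\sigma}{\sqrt\sigma}$ yields $Lw = f$ in $\Omega(r_0)$ with
\[ f = \frac{H_1}{\sqrt\sigma} - \frac{H_1'}{\sqrt{\sigma'}} - \sqrt{D'}\left(\frac{\Delta\sqrt\sigma}{\sqrt\sigma} - \frac{\Delta\sqrt{\sigma'}}{\sqrt{\sigma'}}\right). \]
Here $a\equiv 1$ and $b = \Delta\sqrt\sigma/\sqrt\sigma$ is a bounded continuous coefficient on $\overbar\Omega(r_0)$, since $\sigma\in C^2$ and $\sigma$ is bounded below, so the operator $L$ fits the framework of the Cauchy problem (\ref{eqn:cauchy}).

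Next I would secure the vanishing of the Cauchy data on $\Gamma$. Because $D\equiv D'$ on $\Gamma$ we have $w=0$ there, which also annihilates the tangential gradient; combined with $\partial_\nu D\equiv\partial_\nu D'$ this gives $\partial_\nu w = 0$, hence $\nabla w\equiv 0$ on $\Gamma$. Thus $g=h=0$ in (\ref{eqn:cauchy}), and the boundary terms $\|w\|_{L^2(\Gamma)}$ and $\|\nabla w\|_{L^2(\Gamma)}$ drop out, leaving only $\|f\|_{L^2(\Omega(r_0))}$ inside the data quantity $\mathcal{B}$ of Theorem \ref{the:cauchy_curve}.

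The crux is then the $L^2$ control of $f$ by the data differences, which is exactly what Lemma \ref{lem:sigma} is designed to deliver, and I expect this to be the main obstacle because it must match the regularity appearing on the right-hand side. The first group of terms is handled by writing $\tfrac{H_1}{\sqrt\sigma} - \tfrac{H_1'}{\sqrt{\sigma'}}$ in terms of $H_1 - H_1'$ and $\sigma - \sigma'$, the latter controlled by (\ref{eqn:bound_sigma3}). For the second-order potential difference I would exploit the identity $\tfrac{\Delta\sqrt\sigma}{\sqrt\sigma} = \tfrac14|\nabla\ln\sigma|^2 + \tfrac12\div(\nabla\ln\sigma)$, so that the difference splits into a quadratic piece, controlled by (\ref{eqn:bound_sigma1}), and a divergence piece, controlled by (\ref{eqn:bound_sigma2}); it is precisely this divergence term that forces the $C^3$ norm on the right-hand side. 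Integrating these pointwise bounds over the bounded set $\Omega(r_0)$ gives $\|f\|^2_{L^2(\Omega(r_0))}\le C\sum_j\|H_j - H_j'\|^2_{C^3(\overbar\Omega(r_0))}$.

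Finally I would apply Theorem \ref{the:cauchy_curve} to $w\in H^2(\Omega)$, using the $C^{2,1}$ a priori bounds built into $\mathcal{C}$ to guarantee the prior estimate $\|w\|_{H^1(\Omega(r_0))}^2\le K'$, obtaining $C\delta^4\|w\|^2_{L^2(\Omega(\theta r))}\le \mathcal{B}^{\gamma(r)}$. Converting back through $D-D' = (\sqrt D + \sqrt{D'})\,w$, where $\sqrt D + \sqrt{D'}$ is bounded above and below uniformly on $\mathcal{C}$, yields $\|D-D'\|_{L^2(\Omega(\theta r))}\le C\|w\|_{L^2(\Omega(\theta r))}$. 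Combining this with the source estimate produces the claimed H\"older bound (\ref{eqn:bound_D}), the constant $C$ absorbing the fixed $\delta$ and the routine bookkeeping of the exponent.
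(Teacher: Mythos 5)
Your proposal follows essentially the same route as the paper: form the Cauchy problem for $\sqrt D - \sqrt{D'}$ with vanishing data on $\Gamma$, split $\tfrac{\Delta\sqrt\sigma}{\sqrt\sigma}$ into $\tfrac14|\nabla\ln\sigma|^2 + \tfrac12\div(\nabla\ln\sigma)$ so that Lemma \ref{lem:sigma} controls the source term, and then apply Theorem \ref{the:cauchy_curve}. The argument is correct, and you even supply the final conversion from $\|\sqrt D-\sqrt{D'}\|$ to $\|D-D'\|$ via the uniform bounds on $\sqrt D+\sqrt{D'}$, a step the paper's proof leaves implicit.
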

\begin{proof}
Since $\sqrt{D}$ and $\sqrt{D'}$ satisfy (\ref{eqn:D}) and they have the same boundary value and normal derivative on $\Gamma$,
we can formulate the following Cauchy problem for $\sqrt{D}-\sqrt{D'}$

\begin{equation*}
\left\{\begin{array}{ll}
-\Delta(\sqrt{D}-\sqrt{D'})+\frac{\Delta\sqrt{\sigma}}{\sqrt{\sigma}}\big(\sqrt{D}-\sqrt{D'}\big) = 
	-\Big(\frac{\Delta\sqrt{\sigma}}{\sqrt{\sigma}}-\frac{\Delta\sqrt{\sigma'}}{\sqrt{\sigma'}}\Big)\sqrt{D'}
	+\frac{1}{\sqrt{\sigma}}\big(H_1-H_1'\big)+\Big(\frac{1}{\sqrt{\sigma}}-\frac{1}{\sqrt{\sigma'}}\Big)H'_1  & \text{ in }\Omega,\\
\sqrt{D}-\sqrt{D'} = 0   & \text{ on }\Gamma,\\
\partial_{\nu} \big(\sqrt{D}-\sqrt{D'} = 0\big) = 0 & \text{ on }\Gamma.
\end{array}\right.
\end{equation*}
Since

$$\frac{\Delta\sqrt{\sigma}}{\sqrt{\sigma}} = \frac{1}{4}|\nabla\ln\sigma|^2+\frac{1}{2}\div(\nabla\ln\sigma),$$
applying Theorem \ref{the:cauchy_curve} results

\begin{eqnarray*}
C\big\|\sqrt{D}-\sqrt{D'}\big\|^2_{L^2(\Omega(\theta r))} & \leq &
\left(\big\|H_1-H_1'\big\|^2_{L^2(\Omega(r_0))} 
+ \big\|\nabla\ln\sigma-\nabla\ln\sigma'\big\|^2_{L^2(\Omega(r_0))^n}\right. \\
& & \left. 
+ \big\|\div\big(\nabla\ln\sigma-\nabla\ln\sigma'\big)\big\|^2_{L^2(\Omega(r_0))} 
+ \big\|\sigma-\sigma'\big\|^2_{L^2(\Omega(r_0))}\right)^{\gamma(r)}.
\end{eqnarray*}
Integrating all inequalities in Lemma \ref{lem:sigma} over $\Omega(r_0)$,
we obtain the final estimation (\ref{eqn:bound_D}).
\end{proof}
The stability estimation to reconstruct $\mu$ is given by the next corollary.
\begin{corollary} \label{lastcorollary}
Assume that all assumptions in Theorem \ref{the:bound_D} are satisfied.
Then we have the following H\"older stability estimation to reconstruct $\mu$

\begin{equation}\label{eqn:bound_mu}
C\|\mu-\mu'\|_{L^2(\Omega(\theta r))} \ \leq \ \left(\sum_{j=1}^{n+1}\|H_j-H'_j\|^2_{C^3(\overbar{\Omega}(r_0))}\right)^{\gamma(r)}.
\end{equation}
\end{corollary}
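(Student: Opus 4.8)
The plan is to express $\mu$ pointwise in terms of quantities whose stability is already controlled, namely the diffusion coefficient $D$ (Theorem \ref{the:bound_D}), the auxiliary coefficient $\sigma$ (Lemma \ref{lem:sigma}, in particular \eqref{eqn:bound_sigma3}), and the measured data $H_1$. The starting point is the algebraic identity obtained by combining $H_1=\mu u_1$ with $\sigma=Du_1^2$: eliminating $u_1=\sqrt{\sigma}/\sqrt{D}$ gives
$$\mu = H_1\frac{\sqrt{D}}{\sqrt{\sigma}}, \qquad \mu' = H'_1\frac{\sqrt{D'}}{\sqrt{\sigma'}}.$$
This reduces the stability of $\mu$ to that of $D$, $\sigma$ and $H_1$, all of which are in hand, so no new Carleman-type analysis is needed.

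First I would form $\mu-\mu'$ and split it by a telescoping decomposition,
$$\mu - \mu' = (H_1 - H'_1)\frac{\sqrt{D}}{\sqrt{\sigma}} + H'_1\frac{\sqrt{D}-\sqrt{D'}}{\sqrt{\sigma}} + H'_1\sqrt{D'}\Big(\frac{1}{\sqrt{\sigma}}-\frac{1}{\sqrt{\sigma'}}\Big),$$
isolating the contributions of the data error, the error in $D$, and the error in $\sigma$. Since $(D,\mu),(D',\mu')\in\mathcal{C}$ and $H_1,H'_1$ are bounded above and below on $\overbar\Omega(r_0)$ (the lower bound on $\sigma=Du_1^2$ following from the coefficient bounds and the maximum principle, as in Theorem \ref{the:determinant}), every prefactor above is uniformly bounded and every denominator is uniformly bounded away from zero. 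Using $|\sqrt{D}-\sqrt{D'}|\le C|D-D'|$ and $|1/\sqrt{\sigma}-1/\sqrt{\sigma'}|\le C|\sigma-\sigma'|$, this yields the pointwise bound
$$|\mu-\mu'|\ \le\ C\big(|H_1-H'_1| + |D-D'| + |\sigma-\sigma'|\big)\quad\text{on }\Omega(\theta r).$$

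Then I would integrate over $\Omega(\theta r)$ and estimate the three $L^2$ norms separately. Writing $\mathcal{H}:=\sum_{j}\|H_j-H'_j\|^2_{C^3(\overbar\Omega(r_0))}$, the data term is controlled by $\|H_1-H'_1\|^2_{L^2(\Omega(\theta r))}\le C\mathcal{H}$; the $\sigma$ term by integrating \eqref{eqn:bound_sigma3} over $\Omega(r_0)\supseteq\Omega(\theta r)$, again giving $\le C\mathcal{H}$; and the $D$ term by squaring Theorem \ref{the:bound_D}, giving $\|D-D'\|^2_{L^2(\Omega(\theta r))}\le C\mathcal{H}^{2\gamma(r)}$. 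Invoking the smallness hypothesis $\mathcal{H}\ll 1$ already present in Lemma \ref{lem:sigma} together with $0<\gamma(r)<1$, one has $\mathcal{H}\le\mathcal{H}^{\gamma(r)}$ and $\mathcal{H}^{2\gamma(r)}\le\mathcal{H}^{\gamma(r)}$, so all three terms collapse to the single H\"older rate and we arrive at \eqref{eqn:bound_mu} with the same exponent $\gamma(r)$ as in Theorem \ref{the:bound_D}.

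The genuinely hard analysis, namely the Carleman and Cauchy stability and its transfer to $\sqrt{D}$, has already been carried out in Theorems \ref{the:cauchy_curve} and \ref{the:bound_D}, so the corollary is essentially an algebraic recombination. The only delicate points are bookkeeping ones: verifying the uniform two-sided bounds on $\sigma$ and $H_1$ so that the constants $C$ do not degenerate (this is where the positivity assumptions of $\mathcal{C}$ and of the data enter), and checking that the smallness hypothesis is precisely what is needed to absorb the first-power and $2\gamma(r)$-power terms into the $\gamma(r)$-power right-hand side. I expect this matching of exponents to be the subtlest step, and it rests entirely on $\mathcal{H}$ being small.
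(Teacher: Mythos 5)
Your proof is correct and follows essentially the same route as the paper's: the paper first bounds $u_1-u_1'$ via $u_1=\sqrt{\sigma}/\sqrt{D}$ and then applies "the same procedure" to $\mu=H_1/u_1$, whereas you compose the two identities into $\mu=H_1\sqrt{D}/\sqrt{\sigma}$ and telescope once --- the same algebraic recombination of the established estimates for $D$, $\sigma$ and $H_1$ under the uniform two-sided bounds from $\mathcal{C}$. (Both arguments share the same harmless looseness in passing between squared and unsquared $L^2$ norms, which shifts the H\"older exponent by a constant factor that the paper absorbs silently.)
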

\begin{proof}
Since $u_1 = \tfrac{\sqrt{\sigma}}{\sqrt D}$, combining estimations for $D$ and $\sigma$ results

$$C\|u_1(x)-u'_1(x)\|_{L^2(\Omega(\theta r))}\ \leq \left(\sum_{j=1}^{n+1}\|H_j-H'_j\|^2_{C^3(\overbar{\Omega}(r_0))}\right)^{\gamma(r)}.$$  
Applying the same procedure on $\mu = \tfrac{H_1}{u_1}$ gives the estimation (\ref{eqn:bound_mu}).
\end{proof}

\begin{note} 
The obtained stability results  in Theorem \ref{the:bound_D}, and Corollary \ref{lastcorollary},  indicate that the resolution of PAT is better near the impact zone of the optical illumination sources, and deteriorates far away.
According to Theorem \ref{the:determinant}, it is possible to impose conditions only on the  boundary data $\big\{g_j\big\}_{j=1}^{n+1}, \big\{g'_j\big\}_{j=1}^{n+1}\subset \mathcal{G}$ in order to have all the assumptions in Theorem \ref{the:bound_D} being satisfied. Doing so, we can trace out  the stability constants in Theorem \ref{the:bound_D}, and Corollary \ref{lastcorollary} and show that they only depend  on the boundary data,  $n$, and $\Omega$.  
\end{note}

\section{Numerical experiments}\label{sec:numeric}

In this section, we will present three numerical experiments, for which we choose $\Omega$ to be the unit disc.
To simulate the internal data $H_j(x), j = 1, 2, 3$, we solve (\ref{eqn:diffuse}) with three different boundary conditions,
each of which is a normal distribution with the standard deviation 0.3 and the peak at the angle
$\frac{4}{9}\pi$, $\frac{1}{2}\pi$ and $\frac{5}{9}\pi$ respectively.

\vskip .3cm
We take the gradient and Laplace of $\frac{H_2}{H_1}$ and $\frac{H_3}{H_1}$ to formulate the linear system (\ref{eqn:sigma}) 
which is solved for each discrete position $x$ whenever it is possible.
Actually the non-singularity of the matrix is a quite weak constraint. At least for all our experiments, we never violate it.
Note that we always smooth the data locally before taking the derivative to alleviate oscillation caused by
noise or numerical discretization.
By integration along a suitable curve from a known boundary point to the unknown point $x$, 
we are able to obtain $\ln\sigma(x)$ and hence $\sigma(x)$.
In practice, we choose to start from 10 different boundary points and average to stabilize the computation.

\vskip .3cm
The next step is to compute the coefficient $\frac{\Delta\sqrt{\sigma}}{\sqrt{\sigma}}$ of the equation (\ref{eqn:sigma}).
Since we have constructed $\sigma$, we can calculate the coefficient directly by taking the Laplace of $\sqrt{\sigma}$.
This procedure is extremely unstable because the error from the numerical integration is 
dramatically amplified during calculating the derivative. Alternatively, we calculate the coefficient by the following equality

$$\frac{\Delta\sqrt{\sigma}}{\sqrt{\sigma}} = \frac{1}{4}|\nabla\ln\sigma|^2+\frac{1}{2}\div(\nabla\ln\sigma).$$
Let us emphasize that we only do the computation in a suitable subregion where $H(x)$ is larger than a threshold
which depends on the estimation of the noise level.

\vskip .3cm
Since the coefficient and source terms of (\ref{eqn:D}) are partially reconstructed,
theoretically to reconstruct $D$ we need to solve the Cauchy problem corresponding to $\sqrt{D}$.
If the medium is homogeneous, we can verify that

$$\frac{H}{\sqrt{\sigma}} = \frac{\mu}{\sqrt{D}}\ \text{ and }\ \frac{\Delta\sqrt{\sigma}}{\sqrt{\sigma}}=\frac{\mu}{\sqrt{D}}.$$
Numerically we complete the missing region by a suitable constant, for example the background value or the average of the known value,
and solve the partial differential equation (\ref{eqn:D}) only once to reconstruct $\sqrt{D}$.
We find out that this strategy only affects the reconstruction in a small area close to the boundary of the completing region.
In the following experiments, we only demonstrate reconstructions in a proper subregion.

\subsection{Synthetic medium -- smooth case}
\label{sec:smooth}

The medium is a unit disc with inhomogeneity composed of rectangles and discs of different size.
The background truth of $D$ and $\mu$ are $0.2$ and $20$ respectively.
The variation of $D$ (resp. $\mu$) varies from $0.1$ to $0.35$ (resp. $10$ to $35$).
To obtain the smooth medium, we take the convolution of the piecewise constant medium from the next experiment with a Gaussian function.
The positions of the inhomogeneity are almost the same for $D$ and $\mu$ except that we intentionally 
remove the rectangle on the top in $D$ and add a triangle in $\mu$; 
refer to Figure \ref{fig:smooth_true_D} and \ref{fig:smooth_true_mu}.

\vskip .3cm
We are able to correctly reconstruct positions and values of the inhomogeneity;
refer to Figure \ref{fig:smooth_recon_D} and \ref{fig:smooth_recon_mu}.
The relative error in the region $y>0.2$ is $3.42\%$ for $D$ and $3.19\%$ for $\mu$.
We could notice from the color of Figure \ref{fig:smooth} that the reconstruction is a little lighter than the truth,
which may be the result of the smoothing technique applied each time before calculating derivatives.

\begin{figure}[ptb]
\centering
\begin{subfigure}[p]{.45\textwidth}
    \centering
   	\includegraphics[scale=.4,trim={0cm 1cm 0cm 1cm},clip]{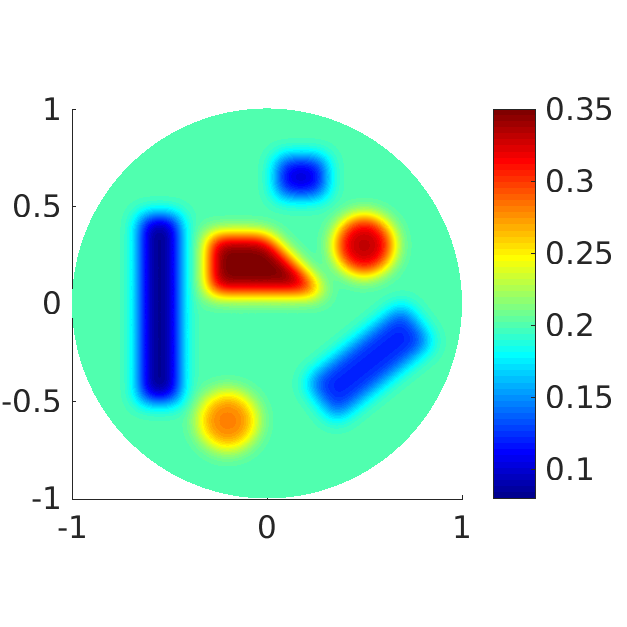}
   	\caption{Background truth of $D$}
   	\label{fig:smooth_true_D}
\end{subfigure}
\begin{subfigure}[p]{.45\textwidth}
    \centering
    \includegraphics[scale=.4,trim={0cm 1cm 0cm 1cm},clip]{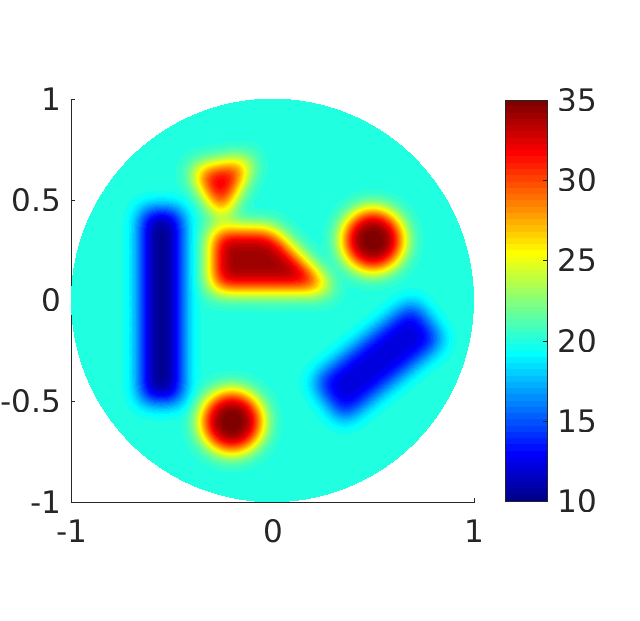}
   	\caption{Background truth of $\mu$}
   	\label{fig:smooth_true_mu}
\end{subfigure}
\begin{subfigure}[p]{.45\textwidth}
    \centering
    \includegraphics[scale=.4,trim={0cm 3cm 0cm 3cm},clip]{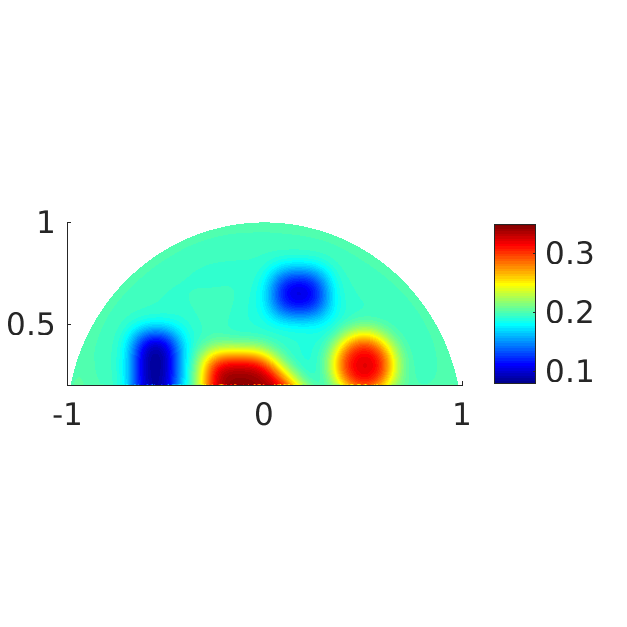}
   	\caption{Reconstruction of $D$}
   	\label{fig:smooth_recon_D}
\end{subfigure}
\begin{subfigure}[p]{.45\textwidth}
    \centering
    \includegraphics[scale=.4,trim={0cm 3cm 0cm 3cm},clip]{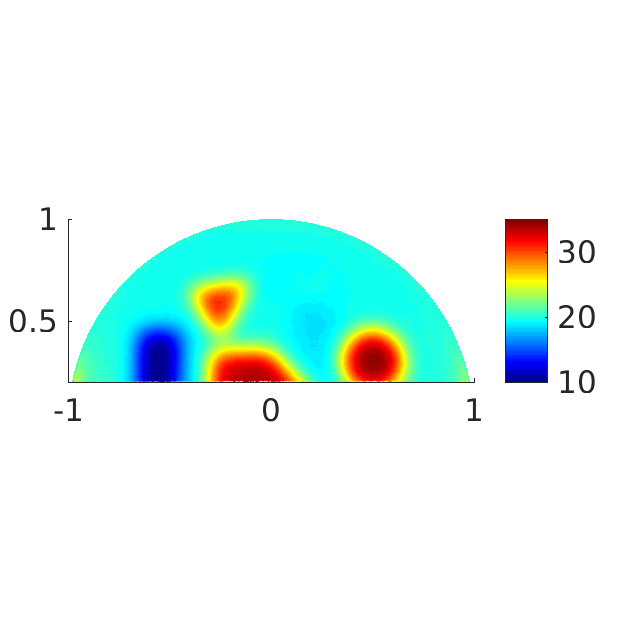}
   	\caption{Reconstruction of $\mu$}
   	\label{fig:smooth_recon_mu}
\end{subfigure}
\caption{Local reconstructions of smooth $D$ and $\mu$.}
\label{fig:smooth}
\end{figure}

\subsection{Synthetic medium -- discontinuous case}
\label{sec:discrete}

To obtain the theoretical stability, it requires certain smoothness of the coefficients $D$ and $\mu$.
In this experiment we try our reconstruction algorithm on a problem with piecewise constant value.
We do not pay any additional attention to the discontinuity or use any special trick inside the code.
The background truth and reconstructed results are demonstrated in Figure \ref{fig:discrete}.
All the embedded inclusions with different shapes are well reconstructed.
The relative error is larger than the smooth case, $8.56\%$ for $D$ and $4.70\%$ for $\mu$ in the region $y>0.2$.

\vskip .3cm
Interestingly, due to the discontinuity of $D$ and $\mu$, 
we observe huge jumps in the reconstructed coefficient $\frac{\Delta\sqrt{\sigma}}{\sqrt{\sigma}}$
on the boundary of the inclusions. The value of the jump depends on the smoothing technique.
That's to say, in the process of filtering out potential noise while computing derivatives,
we decrease the true extreme value of the coefficient as well.
But the huge error in formulating the coefficient and source of (\ref{eqn:D}) does not affect
the reconstruction of $D$ too much, especially qualitatively.

\begin{figure}[ptb]
\centering
\begin{subfigure}[p]{.45\textwidth}
    \centering
   	\includegraphics[scale=.4,trim={0cm 1cm 0cm 1cm},clip]{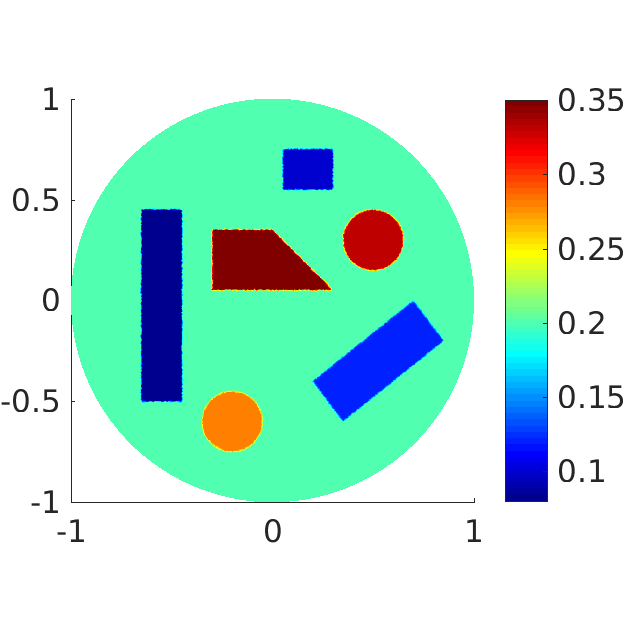}
   	\caption{Background truth of $D$}
   	\label{fig:discrete_true_D}
\end{subfigure}
\begin{subfigure}[p]{.45\textwidth}
    \centering
    \includegraphics[scale=.4,trim={0cm 1cm 0cm 1cm},clip]{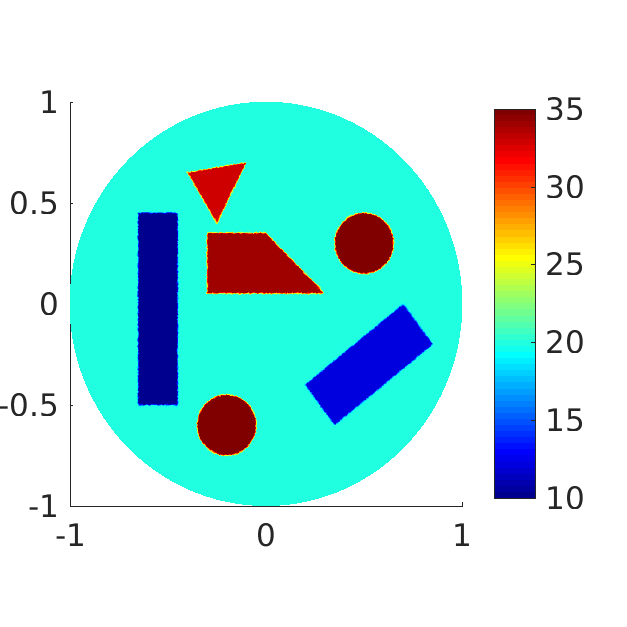}
   	\caption{Background truth of $\mu$}
   	\label{fig:discrete_true_mu}
\end{subfigure}
\begin{subfigure}[p]{.45\textwidth}
    \centering
    \includegraphics[scale=.4,trim={0cm 3cm 0cm 3cm},clip]{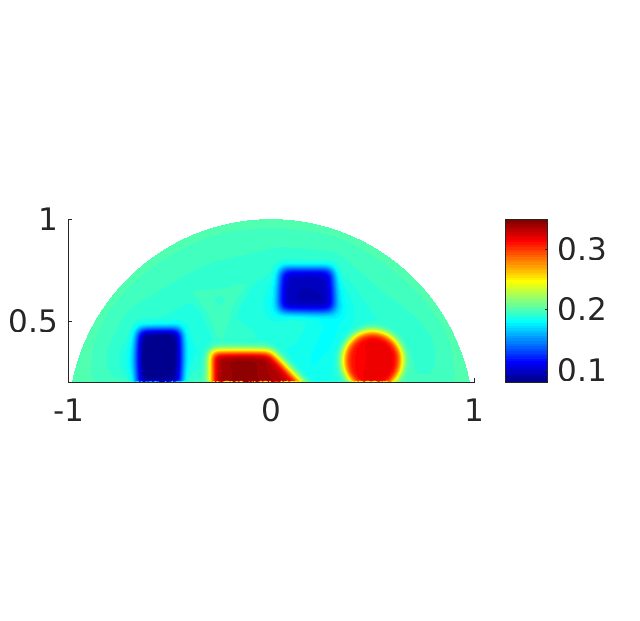}
   	\caption{Reconstruction of $D$}
   	\label{fig:discrete_recon_D}
\end{subfigure}
\begin{subfigure}[p]{.45\textwidth}
    \centering
    \includegraphics[scale=.4,trim={0cm 3cm 0cm 3cm},clip]{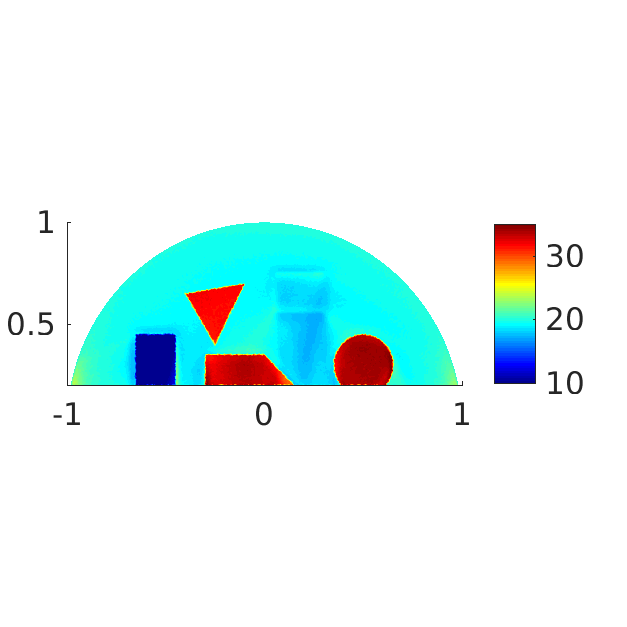}
   	\caption{Reconstruction of $\mu$}
   	\label{fig:discrete_recon_mu}
\end{subfigure}
\caption{Local reconstructions of discontinuous $D$ and $\mu$.}
\label{fig:discrete}
\end{figure}

\subsection{Blood vessel}

We try our algorithm on a more realistic example -- imaging the blood vessel of a piece of biological tissue.
We assign the tissue with proper diffusion and absorption values;
see Figure \ref{fig:real_true_D} and \ref{fig:real_true_mu}.
As demonstrated in Figure \ref{fig:real_recon_D} and \ref{fig:real_recon_mu}, all the features are well characterized by our results.
We only loose a little the contrast like the previous experiments.
The relative error is $4.74\%$ for $D$ and $2.47\%$ for $\mu$.

\begin{figure}[ptb]
\centering
\begin{subfigure}[p]{.45\textwidth}
    \centering
   	\includegraphics[scale=.4,trim={0cm 1cm 0cm 1cm},clip]{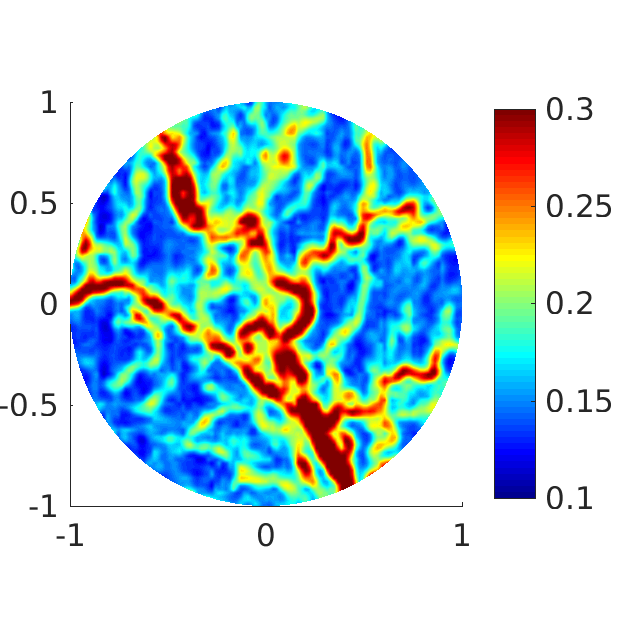}
   	\caption{Background truth of $D$}
   	\label{fig:real_true_D}
\end{subfigure}
\begin{subfigure}[p]{.45\textwidth}
    \centering
    \includegraphics[scale=.4,trim={0cm 1cm 0cm 1cm},clip]{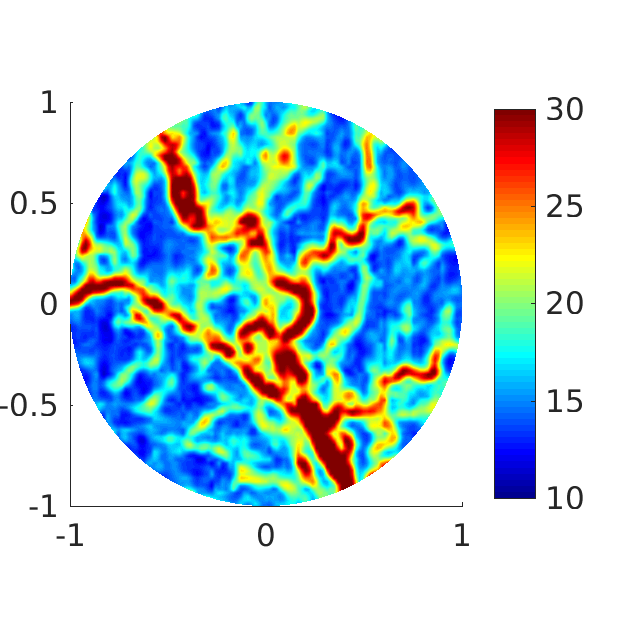}
   	\caption{Background truth of $\mu$}
   	\label{fig:real_true_mu}
\end{subfigure}
\begin{subfigure}[p]{.45\textwidth}
    \centering
    \includegraphics[scale=.4,trim={0cm 3cm 0cm 3cm},clip]{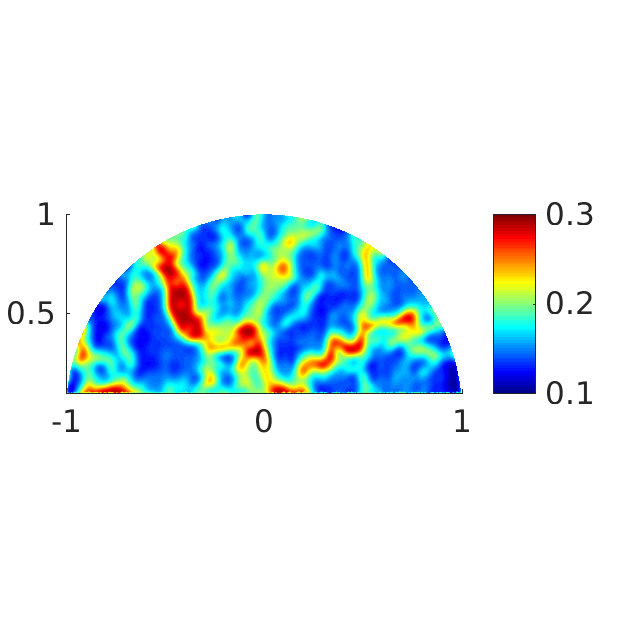}
   	\caption{Reconstruction of $D$}
   	\label{fig:real_recon_D}
\end{subfigure}
\begin{subfigure}[p]{.45\textwidth}
    \centering
    \includegraphics[scale=.4,trim={0cm 3cm 0cm 3cm},clip]{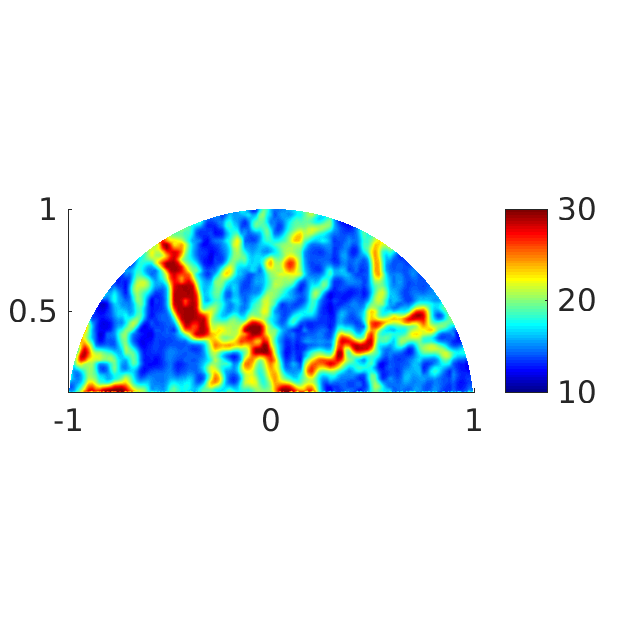}
   	\caption{Reconstruction of $\mu$}
   	\label{fig:real_recon_mu}
\end{subfigure}
\caption{Local reconstructions of $D$ and $\mu$ of the blood vessel.}
\label{fig:real}
\end{figure}

\section{Conclusion}

In this paper, we prove a H\"older stability of the quantitative PAT in a subregion
where the internal information is reliably provided 
based on the stability estimation of a Cauchy problem satisfied by the diffusion coefficient.
The exponent of the H\"older stability converges to a positive constant independent of the subregion as 
the subregion contracts towards the boundary.
\vskip .3cm
Numerical experiments demonstrates that it is possible to locally and efficiently reconstruct the diffusion and absorption coefficients
for smooth and even discontinuous media through the solution of an elliptic equation.

\bibliographystyle{plain}
\bibliography{refs}

\end{document}